\newcommand{\pr}{\mathrm{pr}}
\newcommand{\IN}{\mathbb N}
\newcommand{\e}{\varepsilon}
\newcommand{\eps}{\varepsilon }
\newcommand{\U}{\mathbb U}
\newcommand{\IU}{\mathbb U}
\newcommand{\K}{\mathfrak K}
\newcommand{\catB}{\mathfrak{B}}
\newcommand{\BI}{\mathfrak{BI}}
\newcommand{\DL}{\mathfrak L}
\newcommand{\DF}{\mathfrak{FI}_K}
\newcommand{\FI}{\mathfrak{FI}}
\newcommand{\RFI}{\mathfrak{RI}}
\newcommand{\RI}{\mathfrak{RI}}
\newcommand{\w}{\omega}
\newcommand{\bas}{\ss}
\newcommand{\conv}{\mathrm{conv}}
\newtheorem {theorem}{Theorem}[section]
\newtheorem{corollary}[theorem]{Corollary}
\newtheorem {lemma}[theorem]{Lemma}
\newtheorem{proposition}[theorem]{Proposition}
\theoremstyle{definition}
\newtheorem {definition}[theorem]{Definition}
\title{The universal Banach space with\\ a $K$-suppression unconditional basis}
\author{
{\sc Taras Banakh}\\ \\
{\small Ivan Franko University of Lviv (Ukraine) and Jan Kochanowski University (POLAND)}\\
{\small \texttt{t.o.banakh@gmail.com}} \\
{\sc Joanna Garbuli\'nska-W\c egrzyn}\thanks{Research of the second author was suppored by NCN grant DEC-2013/11/N/ST1/02963.} \\
{\small Institute of Mathematics, Jan Kochanowski University (POLAND)}\\
{\small \texttt{jgarbulinska@ujk.edu.pl}}
}
\begin{document}
\maketitle
\begin{abstract}
Using the technique of Fra\"iss\'e theory, for every constant $K\ge 1$ we construct a universal object $\mathbb U_K$ in the class of Banach spaces possessing a normalized $K$-suppression unconditional Schauder basis.

\noindent
\textbf{MSC (2010)}
Primary:
46B04. % Isometric theory of Banach spaces
Secondary:
46M15, % Categories, functors
46M40. % Inductive and projective limits

\noindent
\textbf{Keywords:} $1$-suppression unconditional Schauder basis, rational spaces, isometry.
\end{abstract}
\section{Introduction}
A Banach space $X$ is \emph{complementably universal} for a given class of Banach spaces if $X$ belongs to this class and every space from the class is isomorphic to a complemented subspace of $X$.

In 1969 Pe\l czy\'nski \cite{pelbases} constructed a complementably universal Banach space for the class of Banach spaces with a Schauder basis.
In 1971 Kadec \cite{kadec} constructed a complementably universal Banach space for the class of spaces with the \emph{bounded approximation property} (BAP).
In the same year Pe\l czy\'nski \cite{pel_any} showed that every Banach space with BAP is complemented in a space with a basis.
Pe\l czy\'nski and Wojtaszczyk  \cite{wojtaszczyk} constructed in 1971 a universal Banach space for the class of spaces with a finite-dimensional decomposition.
Applying Pe\l czy\'nski's decomposition argument \cite{pelczynski}, one immediately concludes that all three universal spaces are isomorphic.
It is worth mentioning a negative result of Johnson and Szankowski \cite{JohnSzan} saying that no separable Banach space can be complementably universal for the class of all separable Banach spaces.
In \cite{asia} the second author constructed an isometric version of the Kadec-Pe\l czy\'nski-Wojtaszczyk space. The universal Banach space from \cite{asia} was constructed using the general categorical technique of Fra\"iss\'e limits \cite{kubis}. This method was also applied by Kubi\'s and Solecki in \cite{solecki} for constructing the Gurari\u\i\ space \cite{gurari}, which possesses the property of extension of almost isometries, which  implies the universality property that is stronger than the standard universality property of the Banach spaces $\ell _{\infty}$ or $C[0,1]$.

In this paper we apply the categorical method of Fra\"iss\'e limits for constructing a universal space $\IU_K$ in the class of Banach spaces with a normalized $K$-suppression unconditional Schauder basis. The universal space constructed by this method has a nice property of extension of almost isometries, which is better than just the standard universality, established in the papers of Pe\l czy\'nski \cite{pelbases} and Schechtman \cite{schechtman} (who gave a short alternative construction of universal space for class of Banach spaces with an unconditional bases). We also prove that the universal space $\IU_K$ is isomorphic to the complementably universal space $\IU$ for Banach spaces with unconditional basis, which was constructed by Pe\l czy\'nski in \cite{pelbases}.

\section{Preliminaries}
All Banach spaces considered in this paper are separable and over the field $\mathbb R$ of real numbers. %Consequently, all Schauder bases in Banach spaces are countable.
\subsection{Definitions}

Let $X$ be a Banach space with a Schauder basis $(\mathsf e_n)_{n=1}^\infty$ and let $(\mathsf e_n^*)_{n=1}^\infty$ be the corresponding sequence of coordinate functionals.
The basis $(\mathsf e_n)_{n=1}^\infty$ is called {\em $K$-supression} for a real constant $K$ if for every finite subset $F\subset \mathbb N$ the projection $\pr_F:X\to X$, $\pr_F:x\mapsto \sum_{n\in F}\mathsf e^*_n(x)\cdot \mathsf e_n$, has norm $\|\pr_F\|\le K$.
It is well-known \cite[3.1.5]{AK} that each $K$-suppression Schauder basis $(\mathsf e_n)_{n=1}^\infty$ is unconditional. So for any $x\in X$ and any permutation $\pi$ of $\IN$ the series $\sum_{n=1}^\infty \mathsf e_{\pi(n)}^*(x)\cdot \mathsf e_{\pi(n)}$ converges to $x$. This means that we can forget about the ordering and think of a $K$-suppression basis of a Banach space as a subset $\bas\subset X$ such that for some bijection $\mathsf e:\mathbb N\to \bas$ the sequence $(\mathsf e(n))_{n=1}^\infty$ is a $K$-suppression Schauder basis for $X$.

More precisely, by a {\em normalized $K$-suppression basis} for a Banach space $X$ we shall understand a subset $\bas\subset X$ for which there exists a family $\{\mathsf e^*_b\}_{b\in \bas}\subset X$ of continuous functionals such that
\begin{itemize}
\item $\|b\|=1=\mathsf e^*_b(b)$ for any $b\in \bas$;
\item $\mathsf e_b^*(b')=0$ for every $b\in \bas$ and $b'\in \bas\setminus \{b\}$;
\item $x=\sum_{b\in \bas}\mathsf e^*_b(x)\cdot b$ for every $x\in X$;
\item for any finite subset $F\subset \bas$ the projection $\pr_F:X\to X$, $\pr_F:x\mapsto \sum_{b\in F}\mathsf e^*_b(x)\cdot b$, has norm $\|\pr_F\|\le K$.
\end{itemize}
The equality $x=\sum_{b\in \bas}\mathsf e^*_b(x)\cdot b$ in the third item means that for every $\e>0$ there exists a finite subset $F\subset \bas$ such that $\|x-\sum_{b\in E}\mathsf e^*_b(x)\cdot b\|<\e$ for every finite subset $E\subset \bas$ containing $F$.

By a \emph{$K$-based Banach space} we shall understand a pair $(X,\bas_X)$ consisting of a Banach space $X$ and a normalized $K$-suppression basis $\bas_X$ for $X$. By a {\em based Banach space} we understand a $K$-based Banach space for some $K\ge 1$. We shall say that a based Banach space $(X,\bas_X)$ is a \emph{subspace} of a based Banach space $(Y,\bas_Y)$ if $X\subseteq Y$ and $\bas_X=X\cap \bas_Y$.

For a Banach space $X$ by $\|\cdot\|_X$ we denote the norm of $X$ and by $B_X:=\{x\in X:\|x\|_X\le 1\}$ the closed unit ball of $X$.

A finite dimensional based Banach $(X,\bas_X)$ is called \emph{rational} if its unit ball $B_X$ is a convex polyhedron spanned by finitely many vectors with rational coordinates in the basis $\bas_X$. A based Banach space $X$ is called {\em rational} if each finite-dimensional based subspace of $X$ is rational.

\subsection{Categories}
Let $\K$ be a category. 
Morphisms and isomorphisms of a category $\K$ will be called {\em $\K$-morphisms} and {\em $\K$-isomorphisms}, respectively.
%For two objects $A,B$ of the category $\K$, by $\K(A,B)$ we will denote the set of all $\K$-morphisms from $A$ to $B$.
A \emph{subcategory} of $\K$ is a category $\DL$ such that each object of $\DL$ is an object of $\K$ and each morphism of $\DL$ is a morphism of $\K$.

A subcategory $\mathfrak L$ of a category $\K$ is {\em full} if each $\K$-morphism between objects of the category $\mathfrak L$ is an $\mathfrak L$-morphism.
A subcategory $\mathfrak L$ of a category $\K$ is \emph{cofinal} in $\K$ if for every object $A$ of $\K$ there exists a $\K$-morphism $f:A\to B$ to an object $B$ of $\mathfrak L$. 

A category $\K$ has the {\it amalgamation property} if for every objects $A, B, C$ of $\K$ and $\K$-morphisms $f:A\to B$ and $g:A\to C$ there exist an object $D$ of $\K$ and $\K$-morphisms $f':K\to D$ and $g':C\to D$ such that $f'\circ f=g'\circ g$.

In this paper we shall work in the category $\catB$, whose objects are based Banach spaces and morphisms are  linear continuous operators $T: X\to Y$ between based Banach spaces $(X,\bas_X)$ and $(Y,\bas_Y)$ such that $T(\bas_X)\subseteq \bas_Y$. 

A morphism $T:X\to Y$ of the category $\catB$ is called an \emph{isometry} (or else an {\em isometry morphism}) if $\|T(x)\|_Y=\|x\|_X$ for any $x\in X$.  By $\BI$ we denote the category whose objects are based Banach spaces and morphisms are isometry morphisms of based Banach spaces. The category $\BI$ is a subcategory of the category $\catB$. 

For any real number $K\ge 1$ let $\catB_K$ (resp. $\BI_K$) be the category whose objects are $K$-based Banach spaces and morphisms are (isometry) $\catB$-morphisms between $K$-based Banach spaces. So, $\catB_K$ and $\BI_K$ are full subcategories of the categories $\catB$ and $\BI$, respectively.  

By $\FI_K$ we denote the full subcategory of $\BI_K$, whose objects are finite-dimensional  $K$-based Banach spaces, and by $\RFI_K$ the full subcategory of $\FI_K$  whose objects are rational finite-dimensional  $K$-based Banach spaces.
So, we have the inclusions $\RFI_K\subset\FI_K\subset\BI_K$ of categories.

From now on we assume that $K\ge 1$ is some fixed real number.

\subsection{Amalgamation}
In this section we prove that the categories $\FI_K$ and $\RI_K$ have the amalgamation property.

\begin{lemma}\label{lemat} {\bf (Amalgamation Lemma)} Let $X$, $Y$, $Z$ be finite-dimensional $K$-based Banach spaces and $j:Z\to X$, $i:Z \to Y$ be $\BI$-morphisms. Then there exist a finite-dimensional $K$-based Banach space $W$ and $\BI$-morphisms $j':Y\to W$ and $i':X\to W$ such that  the diagram
$$\xymatrix{
Y \ar[r]^{j'} &W \\
Z \ar[r]^j \ar[u]^i & X\ar[u]^{i'}}$$
is commutative.\\
Moreover, if the $K$-based Banach spaces $X$, $Y$, $Z$ are rational, then so is the $K$-based Banach space $W$.
\end{lemma}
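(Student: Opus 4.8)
The plan is to realize $W$ as the pushout of $X$ and $Y$ over $Z$ in the category of vector spaces, equipped with a suitable ``$\ell_1$-amalgamated'' norm. Since $j$ and $i$ are isometries they are injective, and a short argument with uniqueness of basis expansions shows $j(\bas_Z)=\bas_X\cap j(Z)$ and $i(\bas_Z)=\bas_Y\cap i(Z)$; so after identifying $Z$ with its images I may assume that $Z$ is a common based subspace of $X$ and $Y$ with $\bas_Z=\bas_X\cap Z=\bas_Y\cap Z$. Put $A:=\bas_X\setminus\bas_Z$, $B:=\bas_Y\setminus\bas_Z$, $C:=\bas_Z$, let $W$ be the finite-dimensional space of finitely supported real functions on the disjoint union $\bas_W:=A\sqcup B\sqcup C$, and regard $X=\mathrm{span}(\bas_X)$ and $Y=\mathrm{span}(\bas_Y)$ as linear subspaces of $W$ meeting in $\mathrm{span}(\bas_Z)=Z$. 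On $W$ I define
$$\|w\|_W:=\inf\{\,\|u\|_X+\|v\|_Y:\ u\in X,\ v\in Y,\ u+v=w\,\},$$
which is finite since every $w$ splits as its restriction to $A$ plus its restriction to $B\cup C$.

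The first step is to check that $\|\cdot\|_W$ is a norm and that the inclusions $X\hookrightarrow W$ and $Y\hookrightarrow W$ are isometric; this is the heart of the argument. One inequality, $\|u\|_W\le\|u\|_X$ for $u\in X$, is trivial (use $u=u+0$). For the converse, if $u=u'+v'$ with $u,u'\in X$ and $v'\in Y$, then $v'=u-u'\in X\cap Y=Z$, so $\|v'\|_Y=\|v'\|_X$ (both embeddings restrict to the norm of $Z$), whence $\|u\|_X\le\|u'\|_X+\|v'\|_X=\|u'\|_X+\|v'\|_Y$; taking the infimum gives $\|u\|_X\le\|u\|_W$. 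The symmetric statement holds for $Y$. Positive-definiteness of $\|\cdot\|_W$ comes for free from the coordinate bound $|w_b|\le K(\|u\|_X+\|v\|_Y)$ valid for every decomposition $w=u+v$ (using $|\mathsf e^*_b(\cdot)|\le K\|\cdot\|$ for a $K$-suppression basis).

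Next I would verify that $\bas_W$ is a normalized $K$-suppression basis of $W$. From the previous step $\|b\|_W=1$ for each $b\in\bas_W$; the functionals $\mathsf e^{*,W}_b\colon w\mapsto w_b$ satisfy the biorthogonality conditions, and the expansion $w=\sum_{b\in\bas_W}w_b\,b$ is a finite sum. The key estimate: for any decomposition $w=u+v$ and any finite $F\subseteq\bas_W$ a coordinate-by-coordinate check gives $\pr^W_F(w)=\pr^X_{F\cap\bas_X}(u)+\pr^Y_{F\cap\bas_Y}(v)$, hence
$$\|\pr^W_F(w)\|_W\le\|\pr^X_{F\cap\bas_X}(u)\|_X+\|\pr^Y_{F\cap\bas_Y}(v)\|_Y\le K\big(\|u\|_X+\|v\|_Y\big),$$
and taking the infimum over decompositions yields $\|\pr^W_F\|\le K$. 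Thus $W$ is a finite-dimensional $K$-based Banach space, the natural inclusions $i'\colon X\to W$ and $j'\colon Y\to W$ are $\BI$-morphisms (isometric, sending $\bas_X$, $\bas_Y$ into $\bas_W$), and the square commutes because $i'\circ j$ and $j'\circ i$ are both the inclusion $Z\hookrightarrow W$.

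For the ``moreover'' part I would show $B_W=\conv(B_X\cup B_Y)$: as $W$ is finite-dimensional the infimum defining $\|w\|_W$ is attained, so $\|w\|_W\le 1$ iff $w=u+v$ with $\|u\|_X+\|v\|_Y\le1$, and rescaling $u$ and $v$ to unit vectors exhibits such a $w$ as a convex combination of points of $B_X\cup B_Y\cup\{0\}=B_X\cup B_Y$; the reverse inclusion is immediate. Hence $\conv(B_X\cup B_Y)$ is a convex polyhedron spanned by finitely many rational vectors whenever $B_X$ and $B_Y$ are. Finally, every finite-dimensional based subspace of $W$ has the form $\mathrm{span}(S)$ for some $S\subseteq\bas_W$, with unit ball $B_W\cap\mathrm{span}(S)$ — the intersection of a rational polytope with a rational coordinate subspace, hence again a rational polytope — so $W$ is rational. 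The only delicate point in the whole proof is the choice of norm on the pushout together with the observation that any cross term $v'=u-u'$ in an $X$–$Y$ decomposition of an element of $X$ is forced to lie in $Z$; everything else is routine.
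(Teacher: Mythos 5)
Your construction coincides with the paper's: the ``$\ell_1$-amalgamated'' norm $\|w\|_W=\inf\{\|u\|_X+\|v\|_Y: u+v=w\}$ on the pushout is exactly the quotient norm on $(X\oplus Y)/\{(z,-z):z\in Z\}$ used there, and both the isometry of the embeddings (via the observation that the cross-term $v'=u-u'$ of a decomposition of an element of $X$ is forced into $Z$) and the $K$-suppression bound (via splitting $\pr_F$ across a decomposition $w=u+v$) are verified by the same estimates, only organized slightly differently. The argument is correct; your treatment of the rationality clause through $B_W=\conv(B_X\cup B_Y)$ is more explicit than the paper's one-line remark but arrives at the same conclusion.
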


\begin{proof} We shall prove this lemma in the special case when the isometries $i$, $j$ are identity inclusions; the general case is analogous but has more complicated notation. Our assumptions on $i,j$ ensure that $Z=X\cap Y$ and $\bas_Z=\bas_X\cap \bas_Y$, where $\bas_X,\bas_Y,\bas_Z$ are the normalized $K$-suppression bases of the $K$-based Banach spaces $X,Y,Z$. It follows from $\bas_Z=\bas_X\cap \bas_Y$ that the coordinate functionals of the bases $\bas_X$ and $\bas_Y$ agree on the intersection $Z=X\cap Y$.

Consider the direct sum $X\oplus Y$ of the Banach space $X$, $Y$ endowed with the norm $\| (x,y)\| =\| x\| _X+\| y\| _Y$.
Let $W=(X\oplus Y)/\Delta $ be the quotient space of $X\oplus Y$ by the subspace $\Delta =\{(z,-z):z\in Z\}$.

We define linear operators $i':X\to W$ and $j':Y\to W$ by $i'(x)=(x,0)+\Delta $ and $j'(y)=(0,y)+\Delta .$\\
Let us show $i'$ and $j'$ are isometries. Indeed, for every $x\in X$
\begin{align*}
\| i'(x)\|_W &=\operatorname{dist}((x,0),\Delta )\leq \| (x,0)\| =\| x\| _X+\| 0\| _Y=\| x\| _X.
\end{align*}
On the other hand, for every $z\in Z$
\begin{align*}
&\| (x,0)-(z,-z)\| =\| (x-z,z)\| =\| x-z\| _X+\| z\| _Y=\\&=\|  x-z\| _X+\| z\| _X\geq \| x-z+z\| _X=\| x\| _X
\end{align*}
and hence $\|x\|_X \leq \inf_{z\in Z} \| (x,0)-(z,-z)\|=\|i'(x)\|_W$. Therefore $\|i'(x)\|_W=\|x\|_X$. 
Similarly, we can show that $j'$ is an isometry.

We shall identify $X$ and $Y$ with their images $i'(X)$ and $j'(Y)$ in $W$.
In this case we can consider the union $\bas_W:=\bas_X\cup \bas_Y$ and can show that $\bas_W$ is a normalized Schauder basis for the (finite-dimensional)  Banach space $W$.  Let $\{\mathsf e_b^*\}_{b\in \bas_W}\subset W^*$ be the sequence of coordinate functionals of the basis $\bas_W$.

Let us show that the basis $\bas_W$ is $K$-suppression. Given any subset $D$ of $\bas_W$ we should prove that the projection $\pr_D:W\to W$, $\pr_D:w\mapsto \sum_{b\in D}\mathsf e^*_b(w)b$, has norm $\|\pr_D\|\le K$.

Write the set $D$ as $D=D_Z\cup D_X\cup D_Y$, where $D_Z=D\cap \bas_Z=D\cap \bas_X\cap \bas_Y$, $D_X=D\backslash \bas_Y$ and $D_Y=D\backslash \bas_X$.

 Taking into account that the bases $\bas_X$ and $\bas_Y$ are $K$-suppression, for any $w\in W$ we obtain:
\begin{align*}
&\|\pr_D(w)\|_W=
\inf\{\|x\|_X+\|y\|_Y:x\in X,\;y\in Y,\;x+y=\pr_D(w)\}=\\
&=\inf\{\|\pr_{D_X}(w)+z'\|_X+\|z''+\pr_{D_Y}(w)\|_Y:z',z''\in Z,\;z'+z''=\pr_{D_Z}(w)\}\le\\
&\le\inf\{\|\pr_{D_X}(w)+z'\|_X+\|z''+\pr_{D_Y}(w)\|_Y:z',z''\in \pr_{D_Z}(Z),\;z'+z''=\pr_{D_Z}(w)\}=\\
&=\inf\{\|\pr_{D_X}(w)+\pr_{D_Z}(z')\|_X+\|\pr_{D_Z}(z'')+\pr_{D_Y}(w)\|_Y:z',z''\in Z,\;z'+z''=\pr_{\bas_Z}(w)\}\le\\
&\le K\cdot \inf\{\|\pr_{\bas_X\setminus \bas_Z}(w)+z'\|_X+\|z''+\pr_{\bas_Y\setminus \bas_X}(w)\|_Y:\; z',z''\in Z,\;\;z'+z''=\pr_{\bas_Z}(w)\}=\\
&=K\cdot \inf\{\|x\|_X+\|y\|_Y:x+y=w\}=K\cdot\|w\|_W.
\end{align*}

If the finite-dimensional based Banach spaces $X$ and $Y$ are rational, then so is their sum $X\oplus Y$ and so is the quotient space $W$ of $X\oplus Y$.
\end{proof}

\section{$\catB$-universal based Banach spaces}

\begin{definition}\label{d:uB} A based Banach space $U$ is defined to be {\em $\catB$-universal} if each based Banach space $X$ is $\catB$-isomorphic to a based subspace of $U$.
\end{definition}

Definition~\ref{d:uB} implies that each $\catB$-universal based Banach space is complementably universal for the class of Banach spaces with unconditional basis.
 Reformulating Pe\l czy\'nski's Uniqueness Theorem 3 \cite{pelbases}, we obtain the following uniqueness result.

\begin{theorem}[Pe\l czy\'nski]\label{t:up} Any two $\catB$-universal based Banach spaces are $\catB$-isomorphic.
\end{theorem}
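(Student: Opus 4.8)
The plan is to recover this statement by running Pe\l czy\'nski's decomposition argument (the ``decomposition method'' of \cite{pelczynski}) entirely inside the category $\catB$, which has the bonus of producing genuine $\catB$-isomorphisms rather than mere linear isomorphisms. Two preliminaries are needed. First, for a based Banach space $(X,\bas_X)$ and any countably infinite index set $I$, the $c_0$-sum $\big(\bigoplus_{i\in I}X\big)_{c_0}$, endowed with the union $\bigsqcup_{i\in I}\bas_{X_i}$ of the bases of the summands, is again a based Banach space with the same suppression constant as $X$ (for a finite $F$ one has $\|\pr_F\|=\sup_i\|\pr_{F\cap\bas_{X_i}}\|$), and its $\catB$-isomorphism type does not depend on $I$. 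Abbreviating it as $(X)_{c_0}$, one gets the $\catB$-isomorphisms $(X)_{c_0}\cong_{\catB}X\oplus(X)_{c_0}\cong_{\catB}(X)_{c_0}\oplus(X)_{c_0}\cong_{\catB}\big((X)_{c_0}\big)_{c_0}$ and $(X\oplus Y)_{c_0}\cong_{\catB}(X)_{c_0}\oplus(Y)_{c_0}$, where all finite direct sums are formed with the $\sup$-norm and the union basis, which keeps them inside $\catB$.

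Second, if $X_0$ is a based subspace of a based Banach space $X$ (so $\bas_{X_0}=X_0\cap\bas_X$) and $X_1$ is the closed linear span of $\bas_X\setminus\bas_{X_0}$, then the map $x\mapsto(\pr_{\bas_{X_0}}x,\pr_{\bas_{X_1}}x)$ is a $\catB$-isomorphism of $X$ onto the external sum $X_0\oplus X_1$: it carries $\bas_X$ bijectively onto the basis of $X_0\oplus X_1$, and it is a linear homeomorphism because the basis $\bas_X$, being $K$-suppression, is unconditional, so the coordinate projections $\pr_{\bas_{X_0}}$ and $\pr_{\bas_{X_1}}$ are bounded. (The projection $\pr_{\bas_{X_0}}$ alone is \emph{not} a $\catB$-morphism, since it kills the vectors of $\bas_{X_1}$; passing to the external sum repairs this.) Combined with $\catB$-universality, this shows that \emph{every} based Banach space $Z$ is $\catB$-isomorphic to a complemented based subspace of $U$, i.e.\ $U\cong_{\catB}Z\oplus W$ for some based Banach space $W$ --- and likewise with $V$ in place of $U$.

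Next I would prove the $\catB$-version of Pe\l czy\'nski's decomposition lemma: if based Banach spaces $A,B$ satisfy $A\cong_{\catB}B\oplus C$ and $B\cong_{\catB}A\oplus D$ for some based Banach spaces $C,D$, and $A\cong_{\catB}(A)_{c_0}$, then $A\cong_{\catB}B$. The proof is the standard telescoping, now with $\cong_{\catB}$ throughout: from $A\cong_{\catB}(A)_{c_0}$ one first gets $A\cong_{\catB}A\oplus A$; next
\[
A\cong_{\catB}(A)_{c_0}\cong_{\catB}(B\oplus C)_{c_0}\cong_{\catB}(B)_{c_0}\oplus(C)_{c_0}\cong_{\catB}B\oplus\big((B)_{c_0}\oplus(C)_{c_0}\big)\cong_{\catB}B\oplus A;
\]
on the other hand $B\cong_{\catB}A\oplus D\cong_{\catB}(A\oplus A)\oplus D\cong_{\catB}A\oplus(A\oplus D)\cong_{\catB}A\oplus B$; since both $A$ and $B$ are $\catB$-isomorphic to $A\oplus B$, we obtain $A\cong_{\catB}B$.

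Finally I would apply the lemma twice. First with $A=(U)_{c_0}$ and $B=U$: the condition $A\cong_{\catB}(A)_{c_0}$ is one of the $\catB$-isomorphisms recorded in the first paragraph; $A\cong_{\catB}B\oplus C$ follows from $(U)_{c_0}\cong_{\catB}U\oplus(U)_{c_0}$; and $B\cong_{\catB}A\oplus D$ holds because $(U)_{c_0}$ is a based Banach space and hence, by the second paragraph, a complemented based subspace of $U$, so $U\cong_{\catB}(U)_{c_0}\oplus D$. The lemma gives $U\cong_{\catB}(U)_{c_0}$. Now apply the lemma with $A=U$ and $B=V$: the property $A\cong_{\catB}(A)_{c_0}$ is precisely what was just proved, while $U\cong_{\catB}V\oplus C$ and $V\cong_{\catB}U\oplus D$ follow from the mutual $\catB$-universality of $U$ and $V$ via the second paragraph. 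The lemma yields $U\cong_{\catB}V$. The only real obstacle here is categorical bookkeeping rather than computation: one must check that every space appearing (external sums, $c_0$-sums, closed spans of subsets of a basis) is an object of $\catB$ and that every identification is realized by an honest $\catB$-morphism. This is painless precisely because $\catB$-isomorphisms need not be isometries and $\catB=\bigcup_{K\ge 1}\catB_K$ is not tied to a fixed suppression constant, so no constant has to be tracked or kept uniform --- which is also why the classical decomposition argument transfers verbatim and even upgrades ``isomorphic'' to ``$\catB$-isomorphic'', matching the reformulation from Pe\l czy\'nski \cite{pelbases}.
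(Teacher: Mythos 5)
Your proposal is correct, but note that the paper does not actually prove Theorem~\ref{t:up}: it merely observes that the statement is a reformulation of Pe\l czy\'nski's Uniqueness Theorem~3 in \cite{pelbases} and cites it. What you have done is reconstruct, inside the category $\catB$, the decomposition argument on which Pe\l czy\'nski's original uniqueness proof rests, and this transplantation is legitimate: the $c_0$-sum $(X)_{c_0}$ with the union of the bases is again a based Banach space with the same suppression bound; the splitting $X\cong X_0\oplus X_1$ along a based subspace is realized by the pair of coordinate projections, which are bounded because a $K$-suppression basis is unconditional (so projections onto arbitrary, also infinite, subsets of the basis have norm $\le K$, as a limit of the finite ones); and every identification in the telescoping chain ($(X)_{c_0}\cong X\oplus(X)_{c_0}$, $((X)_{c_0})_{c_0}\cong(X)_{c_0}$, $(X\oplus Y)_{c_0}\cong(X)_{c_0}\oplus(Y)_{c_0}$, commutativity/associativity of sup-norm sums) is a basis-preserving isomorphism, so the scheme $U\cong(U)_{c_0}$ and then $U\cong V$ indeed produces $\catB$-isomorphisms rather than mere linear isomorphisms. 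The only small points you should spell out to make the write-up airtight are routine: that the union basis of a $c_0$-sum satisfies the unordered-convergence requirement (an easy estimate using $\|x_i\|\to 0$ together with the suppression bound), that $(U)_{c_0}$ over a countable index set is separable and hence an admissible object of $\catB$ (the paper restricts to separable spaces), and that the degenerate case where the complement $X_1$ is the trivial based space causes no harm in the decomposition lemma. In exchange for this bookkeeping your argument buys something the paper's citation does not: a self-contained verification that the ``reformulated'' uniqueness theorem really holds in the based category, with the basis-preserving property of the isomorphism obtained for free, and with no need to track the suppression constants since $\catB$ is not tied to a fixed $K$.
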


A $\catB$-universal based Banach space $\IU$ was constructed by Pe\l czy\'nski in \cite{pelbases}. In the following sections we shall apply the technique of Fra\"iss\'e limits to construct many $\catB$-isomorphic copies of the Pe\l czy\'nski's $\catB$-universal space $\U$.

\section{$\RI_K$-universal based Banach spaces}\label{s:RI}

%The inverse of a bijective $\eps $-isometry is again an $\eps$-isometry.

\begin{definition}\label{def1}
A based Banach space $X$ is called \emph{$\RI_K$-universal} if for any rational finite-dimensional $K$-based Banach space $A$, any isometry morphism $f:\Lambda\to X$ defined on a based subspace $\Lambda$ of $A$ can be extended to an isometry morphism $\bar f:A\to X $.
\end{definition}

We recall that $\RI_K$ denotes the full subcategory of $\BI$ whose objects are rational finite-dimensional $K$-based Banach spaces.
Obviously, up to isomorphism the category $\RI_K$ contains countably many objects.
By Lemma~\ref{lemat}, the category $\RI_K$ has the amalgamation property.
We now use the concepts from \cite{kubis} for constructing a ``generic" sequence in $\RI_K$.
%First of all, a \emph{sequence} in a fixed category $\mathfrak{C}$ is formally a covariant functor from the set of natural numbers $\omega$ into $\mathfrak{C}$.
A sequence $(X_n)_{n\in \omega}$ of objects of the category $\BI_K$ is called \emph{a chain} if each $K$-based Banach space $X_n$ is a subspace of the $K$-based Banach space $X_{n+1}$.  %It is easy to see that each sequence in $\K$ is isomorphic to a chain $(X_n)_{n\in \omega}$ of finite-dimensional rational $K$-based Banach spaces.

\begin{definition}\label{Fresse}
A chain of $(U_n)_{n\in \omega}$ of objects of the category $\RI_K$ is \emph{Fra\"iss\'e} if for any $n\in \omega$ and $\RI_K$-morphism $ f: {U_n} \to Y$ there exist $m > n$ and an $\RI_K$-morphism  $g: Y\to {U_m}$ such that $g \circ f:U_n\to U_m$ is the identity inclusion of $U_n$ to $U_m$.
\end{definition}

Definition \ref{Fresse} implies that the Fra\"iss\'e sequence $\{U_n\}_{n\in \omega}$ is cofinal in the category $\RI_K$ in the sense that each object $A$ of the category $\DF $ admits an $\RI_K$-morphism $A\to U_n$ for some $n\in \omega$. This means that the category $\RI_K$ is countably cofinal.

The name ``Fra\"iss\'e sequence", as in \cite{kubis}, is motivated by the model-theoretic theory of Fra\"iss\'e limits developed by Roland Fra\"iss\'e \cite{fra}.
One of the results in \cite{kubis} is that every countably cofinal category with amalgamation has a Fra\"iss\'e sequence.
Applying this general result to our category $\RI_K$ we get:

\begin{theorem}[\cite{kubis}, Theorem 3.7]\label{Kubisia}
The category $\RI_K$ has a Fra\"iss\'e sequence.
\end{theorem}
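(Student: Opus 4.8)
The plan is to deduce this theorem from the general existence result for Fra\"iss\'e sequences in \cite{kubis}, which asserts that a category admits a Fra\"iss\'e sequence provided it is directed (satisfies the joint embedding property), has the amalgamation property, and has a countable cofinal subcategory. Thus it suffices to check these three properties for $\RI_K$; the substantive one, amalgamation, has already been supplied by the Amalgamation Lemma~\ref{lemat}, whose last sentence keeps us inside the rational objects.

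First I would observe that $\RI_K$ is essentially countable: countably many objects up to isomorphism, and finitely many morphisms between any two of them. Indeed, a rational finite-dimensional $K$-based Banach space is described, up to $\BI$-isomorphism, by finitely many pieces of combinatorial and rational data (the finite basis together with the finitely many rational vertices of its unit polyhedron), so there are only countably many isomorphism types of objects; and a $\BI_K$-morphism $T\colon X\to Y$ between finite-dimensional objects is completely determined by the function $T|_{\bas_X}\colon\bas_X\to\bas_Y$, and there are only finitely many such functions. Hence the full subcategory spanned by a set of representatives of the isomorphism types is countable and cofinal in $\RI_K$.

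Next I would verify directedness. Given objects $A$ and $B$ of $\RI_K$, equip the vector space $A\oplus B$ with the norm $\|(a,b)\|=\|a\|_A+\|b\|_B$ and with the set $\bas_A\sqcup\bas_B$, viewed inside $A\oplus B$ via $a\mapsto(a,0)$ and $b\mapsto(0,b)$, as candidate basis. One checks directly that this is a normalized basis and that for a finite $F=F_A\sqcup F_B$ the projection acts coordinatewise, $\pr_F(a,b)=(\pr_{F_A}a,\pr_{F_B}b)$, whence $\|\pr_F(a,b)\|=\|\pr_{F_A}a\|_A+\|\pr_{F_B}b\|_B\le K\|(a,b)\|$; so $A\oplus_1 B$ is a finite-dimensional rational $K$-based Banach space into which both $A$ and $B$ embed by the canonical $\BI_K$-morphisms. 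Thus $\RI_K$ is directed. (Alternatively, directedness is the special case of amalgamation over the zero-dimensional based space with empty basis.)

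Having checked that $\RI_K$ is a directed, essentially countable category with the amalgamation property, I would conclude by quoting \cite[Theorem 3.7]{kubis} to obtain a Fra\"iss\'e sequence $(U_n)_{n\in\omega}$ in $\RI_K$ in the sense of Definition~\ref{Fresse}. There is no genuine obstacle here: the only point that needs attention is to match the precise list of axioms used in \cite{kubis} with the three properties verified above, and all the real mathematical content (the amalgamation construction) is already contained in Lemma~\ref{lemat}.
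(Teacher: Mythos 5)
Your proposal is correct and follows essentially the same route as the paper: the paper likewise notes that $\RI_K$ has (up to isomorphism) countably many objects, invokes Lemma~\ref{lemat} for amalgamation, and then quotes Kubi\'s's general existence theorem for Fra\"iss\'e sequences. Your extra verifications (finitely many morphisms between objects, directedness via the $\ell_1$-sum or amalgamation over the trivial based space) only make explicit what the paper leaves implicit.
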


From now on, we fix a Fra\"iss\'e\ sequence $(U_n)_{n\in \omega}$ in $\RI_K$, which can be assumed to be a chain of finite-dimensional rational $K$-based Banach spaces.
Let $\U_K$ be the completion of the union $\bigcup_{n\in \omega}U_n$ and $\bas_{\U_K}=\bigcup_{n\in \omega}\bas_{U_n}\subset\U_K$.

\begin{theorem} The pair 
$(\U_K,\bas_{\U_K})$ is an $\RI_K$-universal rational $K$-based Banach space.
\end{theorem}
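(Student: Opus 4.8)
There are two things to verify: that $(\U_K,\bas_{\U_K})$ is genuinely a rational $K$-based Banach space, and that it has the extension property from Definition~\ref{def1}. The first is a ``limit passes through the construction'' check; the second is the standard Fra\"iss\'e-theoretic diagram chase, using the Amalgamation Lemma~\ref{lemat} together with the defining property of the Fra\"iss\'e chain (Definition~\ref{Fresse}).

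First I would check that $\bas_{\U_K}$ is a normalized $K$-suppression basis of $\U_K$. Since $(U_n)_{n\in\w}$ is a chain of based subspaces, the bases $\bas_{U_n}$ increase and their coordinate functionals agree on overlaps, so each $b\in\bas_{\U_K}$ has a well-defined functional $\mathsf e^*_b$ on $\bigcup_n U_n$ with $|\mathsf e^*_b(x)|=\|\pr_{\{b\}}x\|\le K\|x\|$; hence $\mathsf e^*_b$, and more generally every finite projection $\pr_F$, extends continuously to $\U_K$ with $\|\pr_F\|\le K$. The normalization and biorthogonality conditions are inherited. The expansion $x=\sum_{b\in\bas_{\U_K}}\mathsf e^*_b(x)\,b$ holds trivially (as a finite sum) for $x\in\bigcup_n U_n$, and extends to arbitrary $x\in\U_K$ by a routine $\e$-argument: pick $x'\in U_n$ with $\|x-x'\|<\e$, note $\pr_E x'=x'$ whenever $\bas_{U_n}\subseteq E$, and bound $\|x-\pr_E x\|\le\|x-x'\|+\|\pr_E(x'-x)\|\le(1+K)\e$. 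Finally, rationality of $\U_K$ follows because any finite-dimensional based subspace $V\subseteq\U_K$ has finite basis $\bas_V=V\cap\bas_{\U_K}$, which lies in $\bas_{U_n}$ for some $n$; thus $\bas_V=V\cap\bas_{U_n}$, so $V$ is a based subspace of the rational space $U_n$ and is therefore rational.

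For $\RI_K$-universality, let $A$ be a rational finite-dimensional $K$-based space, $\Lambda$ a based subspace of $A$, and $f\colon\Lambda\to\U_K$ an isometry morphism. Since $\Lambda$ is a finite-dimensional based subspace of the rational space $A$, it is rational, and the inclusion $\Lambda\hookrightarrow A$ is an $\RI_K$-morphism. Because $\bas_\Lambda$ is finite and $f(\bas_\Lambda)\subseteq\bas_{\U_K}=\bigcup_m\bas_{U_m}$, there is $n$ with $f(\bas_\Lambda)\subseteq\bas_{U_n}$, hence $f(\Lambda)\subseteq U_n$ and $f\colon\Lambda\to U_n$ is an $\RI_K$-morphism. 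Applying Lemma~\ref{lemat} with $Z=\Lambda$, $X=A$, $Y=U_n$, the inclusion $\Lambda\hookrightarrow A$, and $f\colon\Lambda\to U_n$, we obtain a rational finite-dimensional $K$-based space $W$ and $\BI$-morphisms $i'\colon A\to W$, $j'\colon U_n\to W$ with $j'\circ f=i'|_\Lambda$. Now apply the Fra\"iss\'e property of $(U_m)_{m\in\w}$ to the $\RI_K$-morphism $j'\colon U_n\to W$: there are $m>n$ and an $\RI_K$-morphism $g\colon W\to U_m$ with $g\circ j'$ equal to the inclusion $U_n\hookrightarrow U_m$. Set $\bar f:=g\circ i'\colon A\to U_m\subseteq\U_K$. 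As a composition of isometry morphisms followed by the inclusion $U_m\hookrightarrow\U_K$, $\bar f$ is an isometry morphism, and for $x\in\Lambda$ one computes $\bar f(x)=g(i'(x))=g(j'(f(x)))=f(x)$, so $\bar f$ extends $f$.

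I do not expect a real obstacle here: the only genuinely technical part is the first step, verifying that the $K$-suppression property and rationality survive passing to $\bigcup_n U_n$ and its completion, and this is the same bookkeeping that makes a Schauder basis pass to a completion. The universality itself is a one-step chase, and the two nontrivial ingredients it rests on---amalgamation and the existence/defining property of the Fra\"iss\'e chain---are already available as Lemma~\ref{lemat} and Theorem~\ref{Kubisia}. The points that need the most care are purely formal: confirming that the maps involved (the inclusion $\Lambda\hookrightarrow A$, the corestriction $f\colon\Lambda\to U_n$, and $g\circ i'$) are all legitimate $\RI_K$-morphisms, and that the extension $\bar f$ indeed takes values in $\bigcup_m U_m\subseteq\U_K$.
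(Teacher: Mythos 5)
Your proposal is correct and follows the same route as the paper: first check that $\bas_{\U_K}$ is a normalized $K$-suppression basis and that $(\U_K,\bas_{\U_K})$ is rational, then obtain $\RI_K$-universality from the Fra\"iss\'e property of the chain $(U_n)_{n\in\w}$. The only difference is one of presentation: the paper delegates the basis-of-the-limit step to \cite[Lemma 6.2, Fact 6.3]{fabian} and the extension property to \cite[Proposition 3.1]{kubis}, whereas you prove both directly --- your $(1+K)\e$-estimate for the limit basis and your amalgamation-plus-Fra\"iss\'e chase (Lemma~\ref{lemat} applied to $Z=\Lambda$, $X=A$, $Y=U_n$, then folding $W$ back into some $U_m$) are exactly the arguments those citations encapsulate, and all the small verifications you flag (rationality of $\Lambda$ and $W$, the corestriction $f:\Lambda\to U_n$ being an $\RI_K$-morphism, $\bar f=g\circ i'$ landing in $U_m\subseteq\U_K$) go through as you describe.
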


\begin{proof} First we show that $\bas_{\U_K}=\bigcup_{n\in \omega}\bas_{U_n}$ is a normalized $K$-suppression basis for $\U_K$.
The fact that $\bas_{\U_K}$ is an unconditional Schauder basis with suppression constant $K$ follows from Lemma 6.2 and Fact 6.3 in \cite{fabian}.
For each $n$ the spaces $U_n$ are $K$-based Banach spaces, so $\|b\|=1$ for every $b\in \bas_{U_n}$. This shows that the basis $\bas_{\U_K}$ is normalized.

The based Banach space $(\U_K,\bas_{\U_K})$ is rational, since each finite-dimensional based subspace of $(\U_K,\bas_{\U_K})$ is contained in some rational based Banach space $(U_n,\bas_{U_n})$ and hence is rational.  

The $\RI_K$-universality of the based Banach space $(\U_K,\bas_{U_K})$ follows from the construction and \cite[Proposition 3.1]{kubis}.
\end{proof}

To shorten notation, the $\RI_K$-universal rational $K$-based Banach space $(\U_K,\bas_{\U_K})$ will be denoted by $\U_K$. The following theorem shows that such space is unique up to $\BI$-isomorphism.

\begin{theorem}\label{t:BI}
Any $\RI_K$-universal rational $K$-based Banach spaces $X$, $Y$ are $\BI$-isomorphic, which means that there exists a linear bijective isometry $X\to Y$ preserving the bases of $X$ and $Y$.
\end{theorem}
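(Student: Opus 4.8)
The plan is to run the standard back-and-forth argument proving uniqueness of a Fra\"iss\'e limit (cf. \cite{kubis}), using the $\RI_K$-universality of $X$ and of $Y$ symmetrically.

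\emph{Set-up.} Since $X,Y$ are separable, their normalized $K$-suppression bases are countable; enumerate them $\bas_X=\{b^X_n:n\in\IN\}$ and $\bas_Y=\{b^Y_n:n\in\IN\}$, and set $X_n:=\operatorname{span}\{b^X_1,\dots,b^X_n\}$ and $Y_n:=\operatorname{span}\{b^Y_1,\dots,b^Y_n\}$. These are increasing chains of finite-dimensional based subspaces with $\bas_{X_n}=\{b^X_1,\dots,b^X_n\}$ and $\bigcup_n X_n=\operatorname{span}\bas_X$ dense in $X$ (symmetrically for $Y$), and each $X_n,Y_n$ is rational because $X,Y$ are. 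I would also record an elementary fact to be used repeatedly: \emph{if $T\colon A\to Z$ is an injective isometry $\catB$-morphism of $K$-based Banach spaces, then $T(A)$ is a based subspace of $Z$}, i.e. $T(A)\cap\bas_Z=T(\bas_A)$. Indeed, for $c\in T(A)\cap\bas_Z$ write $c=T(a)=\sum_{e\in\bas_A}\mathsf e^*_e(a)\,T(e)$, where the $T(e)$ are pairwise distinct members of $\bas_Z$; if $c\notin T(\bas_A)$ then, applying the coordinate functional $\mathsf e^*_{T(e)}$ of $\bas_Z$, we get $\mathsf e^*_e(a)=\mathsf e^*_{T(e)}(c)=0$ for every $e\in\bas_A$ (as $c\in\bas_Z\setminus\{T(e)\}$), so $a=0$ and $c=0$, which is absurd.

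\emph{Back-and-forth.} Next I would construct inductively an increasing chain $A_0\subseteq A_1\subseteq\cdots$ of finite-dimensional rational $K$-based spaces, each $A_i$ a based subspace of $A_{i+1}$, together with isometry $\catB$-morphisms $f_i\colon A_i\to X$ and $g_i\colon A_i\to Y$ with $f_{i+1}|_{A_i}=f_i$ and $g_{i+1}|_{A_i}=g_i$, starting from $A_0=\{0\}$ and the trivial maps. (Note that $X,Y$ are infinite-dimensional, any $\RI_K$-universal space being so.) At an even step $i+1$ let $b=b^X_n$ be the least-indexed basis vector of $X$ not lying in $f_i(A_i)$, and choose $N$ with $f_i(\bas_{A_i})\cup\{b\}\subseteq\bas_{X_N}$; by the fact above $f_i(A_i)$ is a based subspace of $X_N$, so we may regard $X_N$ as a $K$-based space $A_{i+1}$ containing $A_i$ as a based subspace via the identification $f_i\colon A_i\cong f_i(A_i)\subseteq X_N$, and take $f_{i+1}\colon A_{i+1}\to X$ to be the resulting isometry onto $X_N$; it extends $f_i$ and its image contains $b$ as a basis vector. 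Since $A_i$ is a based subspace of the rational finite-dimensional $K$-based space $A_{i+1}$, the $\RI_K$-universality of $Y$ lets us extend $g_i$ to an isometry morphism $g_{i+1}\colon A_{i+1}\to Y$. Odd steps are the mirror image: swap $X$ and $Y$, catch the least uncaught $b^Y_n$, and apply the $\RI_K$-universality of $X$. The ``catch the least uncaught basis vector'' bookkeeping guarantees that every $b^X_n$ lies in $f_i(\bas_{A_i})$ for some $i$ and every $b^Y_n$ in $g_i(\bas_{A_i})$ for some $i$.

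\emph{Conclusion, and the main obstacle.} On $A_\infty:=\bigcup_i f_i(A_i)$ --- a linear subspace containing $\bigcup_i f_i(\bas_{A_i})=\bas_X$, hence $\operatorname{span}\bas_X$, hence dense in $X$ --- the rule $h_0(f_i(a)):=g_i(a)$ is well defined and isometric because the $f_i,g_i$ cohere and are isometries; so $h_0$ extends to a linear isometry $h\colon X\to Y$ whose image contains $\bigcup_i g_i(A_i)\supseteq\operatorname{span}\bas_Y$, dense in $Y$. Thus $h$ is a bijective linear isometry, and $h(\bas_X)=\bigcup_i g_i(\bas_{A_i})=\bas_Y$, so $h$ is a $\BI$-isomorphism. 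The genuinely delicate point is keeping track of the based-subspace structure throughout --- transporting $X_N$ (resp.\ $Y_M$) back along $f_i$ (resp.\ $g_i$) and ensuring each basis vector gets caught \emph{as a basis vector} and not merely as an element of some $f_i(A_i)$; the elementary fact in the set-up is exactly what makes this work, and the remainder is the routine Fra\"iss\'e back-and-forth.
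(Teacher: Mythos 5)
Your proof is correct and is essentially the paper's argument: the standard Fra\"iss\'e back-and-forth that alternately applies the $\RI_K$-universality of $Y$ and of $X$ to extend isometry morphisms over rational finite-dimensional $K$-based spaces, then passes to completions to get a base-preserving bijective isometry. The only organizational difference is that you run the induction on an auxiliary chain $(A_i)$ with explicit ``catching'' of basis vectors (and you make explicit the useful observation that the image of a based space under an injective isometry $\catB$-morphism is a based subspace, which the paper uses implicitly), whereas the paper builds mutually inverse morphisms directly between the chains $(X_{n_k})$ and $(Y_{m_k})$ and derives bijectivity from the identities $f_{k+1}\circ g_k=\operatorname{id}$ and $g_{k+1}\circ f_{k+1}=\operatorname{id}$.
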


\begin{proof}  By definition, the rational $K$-based Banach spaces $X,Y$ can be written as the completions of unions $\bigcup_{n\in \omega} X_n$ and $\bigcup_{n\in \omega} Y_n$ of chains $(X_n)_{n\in\w}$ and $(Y_n)_{n\in\w}$ of rational finite-dimensional $K$-based Banach spaces such that $X_0=\{0\}$ and $Y_0=\{0\}$ are trivial $K$-based Banach spaces. 
%So, $\bas_X=\bigcup_{n\in\omega}\bas_{X_n}$ and $\bas_Y=\bigcup_{n\in\omega}\bas_{Y_n}$. 

We define inductively sequences of $\RI_K$-morphisms $\{f_k\}_{k\in \omega}$, $\{g_k\}_{k\in \omega}$ and increasing number sequences $(n_k)$, $(m_k)$ such that the following conditions are satisfied for every $k\in \omega$:
\begin{enumerate}\itemsep=2pt\parskip=2pt
    \item[(1)] $f_{k}:X_{n_{k-1}}\rightarrow  Y_{m_k}$ and $g_k:Y_{m_k}\rightarrow  X_{n_{k}}$ are morphisms of category $\mathfrak{RI}_K$;
    \item[(2)] $f_{k+1}\circ g_{k}=\operatorname{id}\restriction Y_{m_k}$ and $g_{k+1}\circ f_{k+1}=\operatorname{id}\restriction X_{n_k}$.
   % \item[(3)] $X_{n_k}\subseteq X_{n_{k+1}}$, $Y_{m_k}\subseteq Y_{m_{k+1}}$;
    %\item[(4)] $\bigcup \bas_{X_{n_k}}=\bas_X$, $\bigcup \bas_{Y_{n_k}}=\bas_Y$.
\end{enumerate}
We start the inductive construction letting $n_0=0=m_0$ and $f_0:X_0\to Y_0$,  $g_0:Y_0\to X_0$ be the unique isomorphisms of the trivial $K$-based Banach spaces $X_0$ and $Y_0$. To make an inductive step, assume that for some $k \in \omega$, the numbers $n_k$, $m_k$ and $\RI_K$-morphisms $f_k:X_{n_{k-1}}\rightarrow  Y_{m_k}$, $g_k:Y_{m_k}\rightarrow  X_{n_{k}}$ have been constructed. By Definition \ref{def1}, the $\BI$-morphism $g_k^{-1}:g_k(Y_{m_k})\to Y$ defined on the based subspace $g_k(Y_{m_k})$ of the rational finite-dimensional $K$-based Banach space $X_{n_k}$ extends to a $\BI$-morphism $f_{k+1}:X_{n_k}\to Y$. So, $f_{k+1}\circ g_k=\operatorname{id}\restriction Y_{m_k}$. Since $f_{k+1}(\bas_{X_{n_k}})\subset \bas_Y=\bigcup_{i\in\w}\bas_{Y_i}$, there exists a number $m_{k+1}$ such that $f_{k+1}(\bas_{X_{n_k}})\subset\bas_{Y_{m_{k+1}}}$ and hence $f_{k+1}(X_{n_k})\subset Y_{m_{k+1}}$. Since the based space $Y$ is rational, its based subspace $Y_{m_{k+1}}$ is an object of the category $\RI_K$ and the morphism $f_{k+1}:X_{n_k}\to Y_{m_{k+1}}$ is an $\RI_K$-morphism. 

By analogy we can use the $\RI_K$-universality of the based Banach space $X$  find a number $n_{k+1}> n_k$ and an $\RI_K$-morphism $g_{k+1}:Y_{m_{k+1}}\to X_{n_{k+1}}$ such that $g_{k+1}\circ f_{k+1}$ is the identity inclusion $X_{n_k}$ in $X_{n_{k+1}}$.
This complete the inductive step.

After completing the inductive construction consider the isometries $f:\bigcup_{n\in \omega} X_n \to \bigcup_{m\in \omega} Y_m$ and $g: \bigcup_{m\in \omega} Y_m \to \bigcup_{n\in \omega} X_n$ such that $f\restriction X_{n_k}=f_{k+1}$ and $g\restriction Y_{m_k}=g_{k}$  for every $k\in \omega$.

By the uniform continuity, the isometries $f$, $g$ extend to isometries $\bar{f}:X\to Y$ and $\bar{g}:Y\to X$.

The condition (2) of the inductive construction implies that $\bar{f}\circ \bar g =\operatorname{id}_Y$ and $\bar g\circ \bar f=\operatorname{id}_ X$, so $f$ and $g$ are isometric isomorphisms of the Banach spaces $X$ and $Y$. Since the isometries $g_k:Y_{m_k}\to X_{n_k}$ are morphisms of based Banach spaces, we  get $$g(\bas_Y)=g\big(\bigcup_{k\in\w}\bas_{Y_{m_k}}\big)=\bigcup_{k\in\w}g(\bas_{Y_{m_k}})=\bigcup_{k\in\w}g_k(\bas_{Y_{m_k}})\subset
\bigcup_{k\in\w}\bas_{X_{n_k}}=\bas_{X}.$$
By analogy we can show that $f(\bas_X)\subset\bas_Y$. So, $f$ and $g$ are $\BI$-isomorphisms.
\end{proof}

\section{Almost $\FI_K$-universality}

By analogy with the $\RI_K$-universal based Banach space, one can try to introduce a $\FI_K$-universal based Banach space. However such notion is vacuous as each based Banach space has only countably many finite-dimensional based subspaces whereas the category $\FI_K$ contains continuum many pairwise non $\BI$-isomorphic  2-dimensional based Banach spaces. A ``right'' definition is that of an almost $\FI_K$-universal based Banach space, introduced with the help of $\e$-isometries.

For a positive real number $\e$, a linear operator $f:X\to Y$ between Banach spaces $X$ and $Y$ is called an \emph{$\eps$-isometry}  if
$$ (1+\eps )^{-1} \cdot \|x\|_X <  \|f(x)\|_Y < (1+\eps ) \cdot \|x\|_X$$
for every $x\in X\backslash \{0\}$. This definition implies that each $\e$-isometry is an injective linear operator.

A morphism  of the category $\catB$ of based Banach spaces is called an {\em $\e$-isometry $\catB$-morphism} if it is an $\e$-isometry of the underlying Banach spaces.

\begin{definition}\label{def2}
A based Banach space $X$ called \emph{almost $\FI_K$-universal} if for any $\eps>0$ and finite dimensional $K$-based Banach space $A$, any $\eps$-isometry $\catB$-morphism $f:\Lambda\to X$ defined on a based subspace $\Lambda$ of $A$ can be extended to a $\eps$-isometry $\catB$-morphism $\bar f:A\to X $.
\end{definition}

Unfortunately, this notion vacuous for $K>1$ as shown in the following proposition that can be proved by analogy with Proposition 5.8 in \cite{ataras}. 

\begin{proposition}\label{p:non} No based Banach space is almost $\FI_K$-universal for $K>1$.
\end{proposition}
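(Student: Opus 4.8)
The plan is to derive a contradiction from the assumption that some based Banach space $X$ is almost $\FI_K$-universal for a fixed $K>1$. The strategy exploits a rigidity phenomenon: inside a $K$-based space the coordinate functionals, hence the basis projections, are essentially pinned down by the norm, so an $\e$-isometry onto a based subspace has very little room to move the basis vectors. By forcing a two-dimensional (or few-dimensional) rational based space $A$ with an extreme ratio between its norm and the suppression structure to embed via an $\e$-isometry with tiny $\e$, one would obtain a subspace of $X$ whose suppression constant is strictly smaller than $K$ along the image, contradicting the requirement that the extension land in a $K$-based space while still needing to cover a family of spaces whose suppression constants approach $K$ arbitrarily closely.

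First I would set up the model family: for each $\delta>0$ construct a rational finite-dimensional $K$-based space $A_\delta$ together with a one-dimensional based subspace $\Lambda$ (spanned by a single basis vector $b$, with $\Lambda=\mathbb R b$) such that the pair $(A_\delta,\Lambda)$ ``witnesses'' the extremality of $K$ — concretely, so that any $\e$-isometry $\catB$-morphism $\bar f:A_\delta\to X$ extending the isometric inclusion of $\Lambda$ is forced, by the two bullet-point conditions defining a $K$-suppression basis in $X$ together with the $\e$-isometry inequalities $(1+\e)^{-1}\|x\|<\|\bar f(x)\|<(1+\e)\|x\|$, to produce a projection on $\mathrm{span}(\bar f(\bas_{A_\delta}))\subseteq X$ of norm at least $K(1+\e)^{-c}$ for some constant $c$ coming from the geometry of $A_\delta$ — but with the norm on $A_\delta$ arranged so that pushing $\delta\to 0$ drives this lower bound above $K$. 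Because $X$ is genuinely $K$-based, every finite subset $F\subset\bas_X$ has $\|\pr_F\|\le K$, and the image basis vectors $\bar f(\bas_{A_\delta})$ lie in $\bas_X$; choosing $\delta$ small and then $\e$ small (which Definition~\ref{def2} permits, since $X$ is assumed almost $\FI_K$-universal for \emph{every} $\e>0$) yields the contradiction.

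The second main step is the explicit construction of $A_\delta$. I would take $A_\delta$ two-dimensional with basis $\{b,b'\}$ and unit ball a rational polygon chosen so that $\|b\|=\|b'\|=1$, the functional $\mathsf e_b^*$ has norm exactly $K$ (this is possible since $K\ge 1$ and one can arrange the polygon so that the one-point projection $\pr_{\{b\}}$ attains norm $K$), while simultaneously there is a vector $v=\alpha b+\beta b'$ on which $\|v\|$ is small relative to $\|\pr_{\{b\}}v\|=|\alpha|$. Then an $\e$-isometry $\bar f$ extending $b\mapsto f(b)$ must send $v$ to a vector whose norm is close to $\|v\|$, while $\pr_{\{f(b)\}}\bar f(v)=\mathsf e^*_{f(b)}(\bar f(v))\cdot f(b)$; estimating $\mathsf e^*_{f(b)}(\bar f(v))$ from the relation $\bar f(v)-\bar f(v)$'s image under other basis functionals, and using $\|f(b)\|=1$, I would show $\|\pr_{\{f(b)\}}\bar f(v)\|\ge(1+\e)^{-2}\cdot|\alpha|\cdot$ (something close to $1$), hence $\|\pr_{\{f(b)\}}\|\ge (1+\e)^{-O(1)}\cdot|\alpha|/\|v\|$, which for the chosen $A_\delta$ exceeds $K$ once $\delta,\e$ are small — contradiction. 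I expect the main obstacle to be making this last estimate clean: one must control the coordinate functional $\mathsf e^*_{f(b)}$ on $\bar f(v)$ without a priori knowing where $\bar f(b')$ goes, and the honest way is to use the $K$-bound on $\pr_{\{f(b)\}}$ in $X$ together with $\bar f(b')\in\bas_X$ (so $\mathsf e^*_{f(b)}(\bar f(b'))=0$ if $\bar f(b')\ne f(b)$, which must be argued separately from injectivity of the $\e$-isometry), reducing $\mathsf e^*_{f(b)}(\bar f(v))$ to $\alpha$ exactly — after which the contradiction is immediate. Following the cited analogue (Proposition~5.8 of \cite{ataras}) should supply the precise polygon and the bookkeeping; here I would only need to adapt the roles of ``isometry'' and ``suppression constant''.
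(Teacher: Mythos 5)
There is a genuine and fatal gap at the heart of your argument: the quantity $\|\pr_{\{b\}}(v)\|_{A_\delta}/\|v\|_{A_\delta}=|\alpha|/\|v\|_{A_\delta}$ that you propose to ``drive above $K$'' by letting $\delta\to0$ can never exceed $K$, because Definition~\ref{def2} only quantifies over test spaces $A$ that are themselves $K$-based, and $K$-basedness of $A_\delta$ means precisely that $\|\pr_F(v)\|_{A_\delta}\le K\|v\|_{A_\delta}$ for every $F\subset\bas_{A_\delta}$ and every $v$. Your final estimate is therefore
$$\|\pr_{\{f(b)\}}\|_{X}\;\ge\;(1+\e)^{-c}\cdot\frac{\|\pr_{\{b\}}(v)\|_{A_\delta}}{\|v\|_{A_\delta}}\;\le\;(1+\e)^{-c}K\;<\;K,$$
which is perfectly consistent with $X$ being $K$-based; no choice of $\delta$ helps, since the constraint $\|\pr_{\{b\}}\|_{A_\delta}\le K$ is independent of $\delta$. (Your intermediate computation is actually fine and even simpler than you fear: since $\bar f$ is injective and maps $\bas_{A_\delta}$ into $\bas_X$, one gets $\mathsf e^*_{f(b)}(\bar f(v))=\alpha$ exactly, so $\|\pr_{\{f(b)\}}(\bar f(v))\|_X=|\alpha|$; the only loss is the single factor $(1+\e)$ in $\|\bar f(v)\|_X<(1+\e)\|v\|_{A_\delta}$. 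But that single factor is exactly what kills the contradiction.) This is not a presentational issue: every one-shot estimate of this type — pushing one vector $v$ through one extension and comparing $\|\pr_F(\bar f(v))\|$ with $\|\bar f(v)\|$ — loses at least one factor of $(1+\e)^{\pm1}$ in precisely the place where an exact equality would be needed, so a two-dimensional test space with a one-dimensional $\Lambda$ cannot by itself contradict $\|\pr_F\|_X\le K$.

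For comparison, the paper offers no proof at all of Proposition~\ref{p:non}; it defers to Proposition~5.8 of \cite{ataras}, so I can only judge your proposal on its own terms, and on those terms the mechanism you describe does not close. Whatever the correct argument is, it must extract more than the ratio $\|\pr_F(v)\|/\|v\|$ from a single extension: for instance by iterating Definition~\ref{def2} so that quantities already realized \emph{exactly} inside $X$ (the normalization $\|\mathsf b\|_X=1$ of basis vectors, the resulting $1/K$-separation of distinct elements of $\bas_X$, or the exact value of $\|f(v)\|_X$ for $v$ in a previously embedded $\Lambda$) are played against the multiplicative $\e$-slack of a subsequent extension, with the test norms on $\Lambda$ and $A$ chosen at the extreme ends of the windows that the $\e$-isometry conditions permit. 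Your sketch gestures at some of these ingredients (the exact reduction of $\mathsf e^*_{f(b)}(\bar f(v))$ to $\alpha$, the separate argument that $\bar f(b')\ne f(b)$), but it never identifies where a genuinely exact constraint beats the $(1+\e)$ tolerance, and without that the claimed contradiction does not materialize.
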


\begin{theorem}\label{t:ru=>au}
Any $\RI_K$-universal rational $K$-based Banach space $X$ is almost $\FI_1$-universal.
\end{theorem}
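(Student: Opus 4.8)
The plan is to show that given $\e>0$, a finite-dimensional $K$-based space $A$ with a based subspace $\Lambda$, and an $\e$-isometry $\catB$-morphism $f\colon\Lambda\to X$, one can extend $f$ to an $\e$-isometry $\catB$-morphism $\bar f\colon A\to X$. Since $K=1$ here, every object of $\FI_1$ carries a $1$-suppression (monotone) unconditional basis, and the obstacle present for $K>1$ in Proposition~\ref{p:non} disappears. The idea is a standard ``rational perturbation'' argument: replace $A$ by a nearby \emph{rational} $1$-based space, transport $f$ to an honest isometry, and invoke the $\RI_1$-universality of $X$ (Definition~\ref{def1}).

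First I would reduce to the case where $f$ itself is an isometry by slightly enlarging $\e$: it suffices to produce, for every $\e'<\e$, an extension that is an $\e'$-isometry, so fix $\e'$ with $f$ an $\e'$-isometry and choose $\delta>0$ small with $(1+\delta)^2(1+\e')<1+\e$ and $(1+\delta)^{-2}(1+\e')^{-1}>(1+\e)^{-1}$. Next, the key step: approximate the norm of $A$ by a rational norm. Concretely, on the finite-dimensional space $A$ (with fixed basis $\bas_A$, of cardinality $d=\dim A$), one perturbs the unit ball $B_A$ to a rational convex polyhedron $B$ whose Minkowski functional $\|\cdot\|'$ satisfies $(1+\delta)^{-1}\|a\|_A\le\|a\|'\le(1+\delta)\|a\|_A$ for all $a\in A$ and such that the coordinate projections $\pr_F$ still have norm $\le 1$ with respect to $\|\cdot\|'$ — this is where $K=1$ matters, since the monotonicity condition $\|\pr_F x\|\le\|x\|$ is preserved under small perturbations only when the constant is exactly $1$ (one symmetrizes $B$ under the coordinate projections, e.g.\ by intersecting with the sets $\pr_F^{-1}(\pr_F(B))$ or taking a suitable convex hull, to force $1$-suppression, and then rationally approximates). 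Call the resulting rational $1$-based space $A'=(A,\bas_A,\|\cdot\|')$, and let $\Lambda'$ be the corresponding rational based subspace supported on $\Lambda$. The identity map $A\to A'$ is a $\delta$-isometry $\catB$-morphism.

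Now on $\Lambda'$ we have the composite $g:=f\circ\mathrm{id}_{\Lambda'\to\Lambda}\colon\Lambda'\to X$, which is a $\catB$-morphism that is a $(1+\delta)(1+\e')-1$-isometry; I would adjust it to an \emph{honest} isometry $g'\colon\Lambda'\to X$ by rescaling the norm on $\Lambda'$ (or, more precisely, by noting $g(\Lambda')$ with the norm inherited from $X$ is again a rational $1$-based space $1$-isomorphic via $g$ — possible again because $\Lambda'$ is already rational and finite-dimensional, so its image norm, being a restriction of the norm of $X$, makes $g$ an isometry onto a rational $1$-based subspace of $X$; strictly, one may need one more rational perturbation so that $g(\Lambda')$ is rational inside $X$, absorbing another factor $1+\delta$). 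Applying the $\RI_1$-universality of $X$ to the isometry $\catB$-morphism $g'\colon\Lambda'\to X$ and the object $A'$, we obtain an isometry $\catB$-morphism $\bar g\colon A'\to X$ extending $g'$. Finally set $\bar f:=\bar g\circ\mathrm{id}_{A\to A'}\colon A\to X$; it is a $\catB$-morphism (composition of $\catB$-morphisms), it extends $f$ on $\Lambda$ up to the identifications made, and its distortion is bounded by $(1+\delta)^2(1+\e')<1+\e$ on one side and $(1+\delta)^{-2}(1+\e')^{-1}>(1+\e)^{-1}$ on the other, so $\bar f$ is an $\e$-isometry $\catB$-morphism, as required.

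\textbf{Main obstacle.} The delicate point is the rational approximation of $A$ \emph{within the class of $1$-based spaces}: one must perturb the unit ball to a rational polytope while keeping all coordinate projections of norm $\le 1$ and keeping the chosen basis normalized. The monotonicity constraint $\|\pr_F x\|\le\|x\|$ is a closed, convex-in-$B$ condition that holds with equality-type rigidity, so a naive perturbation can destroy it; the fix is to first symmetrize the perturbed ball under the finite semigroup of coordinate projections $\{\pr_F:F\subseteq\bas_A\}$ before taking the rational hull, and to verify that this symmetrization moves the norm by at most a further factor $1+\delta$. Everything else is bookkeeping with the constants $\e'$, $\delta$ and routine use of Lemma~\ref{lemat} and Definition~\ref{def1}.
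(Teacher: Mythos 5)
There is a genuine gap, and it sits exactly where you wave your hands. Definition~\ref{def2} requires the extension to satisfy $\bar f\restriction\Lambda=f$ \emph{literally}, so the isometry you feed into the universality property must be $f$ itself, viewed as an isometry with respect to a norm on the whole of $A$ that restricts on $\Lambda$ exactly to the pullback norm $\|a\|'_\Lambda:=\|f(a)\|_X$. Your order of operations — first rationally approximate $\|\cdot\|_A$ by a norm chosen independently of $f$, then ``adjust'' $g=f\circ\mathrm{id}$ to an honest isometry $g'$ by rescaling the norm on $\Lambda'$ — cannot be made to work: once you change the norm on $\Lambda'$, it is no longer a based subspace of your $A'$ (the restricted norms disagree), so Definition~\ref{def1} does not apply; and if instead you modify the norm of $A'$ to match, you are facing precisely the problem your proposal never solves, namely to construct a \emph{rational} norm on $A$ which (i) restricts on $\Lambda$ to $\|f(\cdot)\|_X$, (ii) keeps every $b\in\bas_A$ of norm exactly $1$, (iii) keeps the suppression property of $\bas_A$, and (iv) stays within a factor $1+\e$ of $\|\cdot\|_A$. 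Moreover, even if you did all this, your $\bar f$ would restrict on $\Lambda$ to the adjusted map $g'$, not to $f$ — ``extends $f$ up to the identifications made'' is not what the definition asks. The paper's proof avoids all of this by never touching $f$: it sets $B'_\Lambda=f^{-1}(B_X)\cap\Lambda$ (the pullback ball, automatically rational because $X$ is rational), picks a projection-invariant rational polyhedron $P$ with $\frac1{1+\delta'}B_A\subset P\subset\frac1{1+\delta}B_A$, defines $B'_A=\conv(B'_\Lambda\cup\bas_A\cup P)$, and then proves the nontrivial facts $B'_A\cap\Lambda=B'_\Lambda$ (via the coordinate projection $\pr_\Lambda$, using that $A$ is $1$-based) and $\|b\|'_A=1$ for $b\in\bas_A$ (via Hahn--Banach through $X$ and coordinate functionals). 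These verifications are the substance of the theorem, not bookkeeping.

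A second, related error: since $X$ is only $K$-based, the pullback norm $\|f(\cdot)\|_X$ on $\Lambda$ has suppression constant $K$, not $1$; so the enlarged space $A'$ can only be made a rational \emph{$K$-based} space, and the hypothesis you must invoke is the assumed $\RI_K$-universality of $X$ (your ``$\RI_1$-universality'' is not a hypothesis, and your claim that $g(\Lambda')\subset X$ is a rational $1$-based space is false for $K>1$). Your insistence on forcing $A'$ to be $1$-suppression by symmetrizing under the coordinate projections is incompatible with condition (i) above whenever the pullback norm on $\Lambda$ fails $1$-suppression; the paper instead only verifies $\pr_F(B'_A)\subset K\cdot B'_A$, which is all that is needed to place $A'$ in $\RI_K$. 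Where $K=1$ genuinely enters is not in the rational perturbation of $A$ per se, but in the two uses of $\pr_F(B_A)\subset B_A$ for the \emph{original} space $A$: making $P$ projection-invariant inside $\frac1{1+\delta}B_A$, and pushing $B'_A\cap\Lambda$ into $B'_\Lambda$ via $\pr_\Lambda$. Your proposal does not identify either of these points.
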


\begin{proof}
We shall use the fact, that the norm of any finite-dimensional based Banach space can be approximated by a rational norm (which means that its unit ball coincides with the convex hull of finitely many points having rational coordinates in the basis).

Let X be an $\RI_K$-universal rational K-based Banach space for some $K\geq 1$.
To prove that $X$ is almost $\FI_1$-universal,  take any $\eps>0$, any finite-dimensional $1$-based Banach space $A$ and an $\eps$-isometry $\catB$-morphism $f:\Lambda\to X$ defined on a based subspace $\Lambda$ of $A$. We recall that by $\|\cdot\|_A$ and $\|\cdot\|_\Lambda$ we denote the norms of the Banach spaces $A$ and $\Lambda$. 
The morphism $f$ determines a new norm $\| \cdot \|_\Lambda'$ on $\Lambda$, defined by $\| a \|_\Lambda'=\|f(a)\|_X$ for $a\in \Lambda$. Since $X$ is rational and $K$-based, $\|\cdot \|'_\Lambda$ is a  rational norm on $\Lambda$ such that $\|\pr_F(a)\|'_\Lambda\le K\cdot \|a\|'_\Lambda$ for every $a\in \Lambda$ and every subset $F\subset \bas_\Lambda$. Taking into account that $f$ is an $\eps$-isometry, we conclude that $(1+\eps)^{-1}<\|a\|_\Lambda'<(1+\eps)$ for every $a\in \Lambda$ with $\|a\|_\Lambda=1$. By the compactness of the unite sphere in $\Lambda$, there exists a positive $\delta <\eps$ such that $(1+\delta )^{-1}<\|a\|'_\Lambda<(1+\delta )$ for every $a\in \Lambda$ with $\|a\|_\Lambda=1$.  This inequality implies $\frac{1}{1+\delta }B_\Lambda \subset B_\Lambda'\subset (1+\delta )B_\Lambda$, where $B_\Lambda=\{a\in \Lambda: \|a\|_\Lambda\leq 1\}$ and $B_\Lambda'=\{a\in \Lambda: \|a\|'_\Lambda\leq 1\}$ are the closed unit balls of $\Lambda$ in the norms $\| \cdot \|_\Lambda$ and $\|\cdot \|_\Lambda'$. Choose $\delta '$ such that  $\delta< \delta' <\eps$. 

Let $B_A=\{a\in A: \|x\|_A\leq 1\}$ be the closed unit ball of the Banach space $A$. Choose a rational polyhedron $P$ in $A$ such that $P=-P$ and $\frac{1}{1+\delta' }B_A\subset P \subset \frac{1}{1+\delta }B_A$. 
Since $A$ is a 1-based Banach space, $\bigcup_{F\subset\bas_A}\pr_F(B_A)=B_A$. So, we can replace $P$ by the convex hull of $\bigcup_{F\subset\bas_A}\pr_F(P)$ and assume that $\pr_F(P)=P$ for every $F\subset\bas_A$.

Consider the set
$$P':=B'_\Lambda\cup\bas_A\cup P.$$ The convex hull $B_A':=\operatorname{conv} (P')$ of $P'$ is a rational polyhedron in the based Banach space $A$. The rational polyhedron $B_A'$, being convex and symmetric, determines a rational norm $\| \cdot\|_A'$ on $A$ whose closed unit ball coincides with $B_A'$. By $A'$ we denote the Banach space $A$ endowed with the norm $\|\cdot\|'_A$.

Taking into account that $P\subset\frac1{1+\delta}B_A$ and $A$ is a $1$-based Banach space, we conclude that 
\begin{multline*}
P'\subset B_\Lambda'\cup \tfrac1{1+\delta}\big(B_A\cup \bigcup_{F\subset \bas_{A}} \pr_F(B_A)\big)=B'_\Lambda\cup\tfrac1{1+\delta}(B_A\cup B_A)=\\
=B'_\Lambda\cup\tfrac1{1+\delta}B_A\subset(1+\delta)B_\Lambda\cup B_A\subset(1+\delta)B_A
\end{multline*}

Let us show that $\|a\|'_A=\|a\|'_\Lambda$ for each $a\in \Lambda$, which is equivalent to the equality $B_A'\cap \Lambda=B_\Lambda'$. The inclusion $B_\Lambda'\subset B_A'\cap \Lambda$ is evident.
To prove the reverse inclusion $B_\Lambda'\supset B_A'\cap \Lambda$, consider the coordinate projection $$\pr_\Lambda:A\to \Lambda,\;\;\textstyle{\pr_\Lambda:\sum_{b\in\bas_A}x_bb\mapsto \sum_{b\in\bas_\Lambda}x_bb}$$onto the subspace $\Lambda$ of $A$. Taking into account that $A$ is a 1-based Banach space, we conclude that $\pr_\Lambda(B_A)=B_A\cap\Lambda=B_\Lambda$. 
Then $$
\begin{aligned}
\pr_\Lambda(P')&=\pr_\Lambda(B_\Lambda')\cup\pr_\Lambda(\bas_A)\cup\pr_\Lambda(P)\subset\\
&\subset B'_\Lambda\cup\{0\}\cup\bas_\Lambda\cup\pr_\Lambda(\tfrac1{1+\delta}B_A)=B'_\Lambda\cup \tfrac1{1+\delta}B_\Lambda=B'_\Lambda
\end{aligned}
$$as $\frac1{1+\delta}B_\Lambda\subset B_\Lambda'$. Consequently, $$B'_A\cap\Lambda\subset \pr_\Lambda(B'_A)=\pr_\Lambda(\conv(P'))=\conv(\pr_\Lambda(P'))\subset\conv(B'_\Lambda)=B_\Lambda',$$which completes the proof of the equality $B_A'\cap \Lambda=B'_\Lambda$.

The inclusion $\bas_A\subset B_A'$ implies that $\|b\|'_A\le 1$ for any $b\in\bas_A$. We claim that $\|b\|_A'=1$ for any $b\in\bas_A$. If $b\in\bas_\Lambda$, then 
$f(b)\in\bas_X$ and $\|f(b)\|_X=1$. By the Hahn-Banach Theorem, there exists a linear continuous functional $x^*\in X^*$ such that $x^*(f(b))=1$ and $x^*(B_X)\subset[-1,1]$ where $B_X=\{x\in X:\|x\|_X\le 1\}$ is the closed unit ball of the Banach space $X$. Now consider the linear functional $a^*=x^*\circ f\circ \pr_\Lambda\in A^*$ and observe that $$a^*(B_A')=x^*\circ f(\pr_\Lambda(B_A'))=x^*\circ f(B'_\Lambda)\subset x^*(B_X)\subset[-1,1],$$which means that $a^*$ has norm $\|a^*\|'_{A^*}=1$ in the dual Banach space $(A')^*$. Now we see that $1=x^*(f(b))=a^*(b)\le\|a^*\|'_{A^*}\cdot\|b\|'_{A}\le  \|b\|'_{A}$ and hence $\|b\|'_{A}=1$.

If $b\notin \bas_\Lambda$, then we can  consider the coordinate functional $\mathbf e^*_b\in A^*$ of $b$.
Since $\bas_A$ is a $1$-suppression basis for the 1-based Banach space $A$, $\mathbf e^*_b(B_A)\subset[-1,1]$.  Then $$\mathbf e^*_b(P')\subset \mathbf e^*_b(B'_\Lambda)\cup\mathbf e^*_b(\pm\bas_A)\cup \mathbf e^*_b(B_A)\subset  \{0\}\cup \{-1,0,1\}\cup [-1,1]=[-1,1],$$ which means that the functional $\mathbf e^*_b$ has norm $\|\mathbf e^*_b\|'_{A^*}\le 1$ in the dual Banach space $(A')^*$.
Then $1=\mathbf e^*_b(b)\le\|\mathbf e^*_b\|'_{A^*}\cdot\|b\|'_{A}\le \|b\|'_{A}$ and hence $\|b\|'_{A}=1$. Therefore, the Banach space $A'$ endowed with the base $\bas_{A'}:=\bas_A$ is a based Banach space.

Next, we show that the based Banach space $A'$ is $K$-based. Indeed, for any $F\subset \bas_{A}$ we get 
$$
\pr_F(P')=\pr_F(B'_\Lambda)\cup \pr_F(\bas_A)\cup \pr_F(P)\subset K\cdot B'_\Lambda\cup(\{0\}\cup \bas _A)\cup P=[-K,K]\cdot P'$$ and hence 
\begin{multline*}
\pr_F(B'_A)=\pr_F(\conv(P'))=\conv(\pr_F(P'))\subset \conv([-K,K]\cdot P')=\\
=[-K,K]\cdot\conv(P')=K\cdot B_A',
\end{multline*} witnessing that the based Banach space $A'$ is $K$-based.

The inclusions $\frac{1}{1+\delta' }B_A\subset B_A'\subset (1+\delta )B_A$ imply the strict inequality 
\begin{equation}\label{eq1}
(1+\eps)^{-1}\|a\|_A < \|a\|_A' <(1+\eps)\|a\|_A
\end{equation} holding for all $a\in A\backslash \{0\}$.

Let $\Lambda'$ and $A'$ be the $K$-based Banach spaces $\Lambda$ and $A$ endowed with the new rational norms $\|\cdot\|_\Lambda'$ and $\|\cdot\|_A'$, respectively. It is clear that $\Lambda'\subset A'$. The definition of the norm $\|\cdot\|'_\Lambda$ ensures that $f:\Lambda'\to X$ is a $\BI$-morphism.
Using the $\RI_K$-universality of $X$, extend the isometry morphism $f:\Lambda'\to X$ to an isometry morphism $\bar f:A'\to X$. The inequalities~\eqref{eq1} ensure that $\bar f:A\to X$ is an $\eps$-isometry $\catB$-morphism from $A$, extending the $\eps$-isometry $f$.
This completes the proof of the almost $\FI_1$-universality of $X$.

Next consider the convex hull $B_A':=\operatorname{conv} (P')$ of the set $P'=B_\Lambda'\cup P\cup \bigcup_{F\subset \bas_{A}}\pr_F(P)$ and observe that $B_A'$ is a rational polyhedron in the based Banach space $A$. Taking into account that $P\subset\frac1{1+\delta}B_A$, $B_\Lambda'\subset(1+\delta)B_\Lambda\subset(1+\delta)B_A$,  and $A$ is a $1$-based Banach space, we conclude that 
\begin{multline*}
P'\subset B_\Lambda'\cup \tfrac1{1+\delta}\big(B_A\cup \bigcup_{F\subset \bas_{A}}\pr_F(B_A)\big)=B'_\Lambda\cup\tfrac1{1+\delta}(B_A\cup B_A)=\\
=B'_\Lambda\cup\tfrac1{1+\delta}B_A\subset(1+\delta)B_\Lambda\cup\tfrac1{1+\delta}(1+\delta)B_A\subset(1+\delta)B_A
\end{multline*}

and hence
$$\frac{1}{1+\delta'}B_A\subset P\subset  B_A':=\operatorname{conv} (P')\subset (1+\delta )B_A.$$
The convex symmetric set $B_A':=\conv(P')$ determines a rational norm $\| \cdot\|_A'$ on $A$ whose unite ball coincides with $B_A'$. We claim that the base $\bas_{A}$ of the Banach space $A':=(A,\|\cdot\|_A')$ is $1$-suppression.
Indeed, for any set $F\subset\bas_A$ we have 
$$\pr_F(P')=\pr_F(B_\Lambda')\cup\pr_F(P)\cup\bigcup_{E\subset\bas_A}\pr_F\circ\pr_E(P)\subset B'_\Lambda\cup P'\cup P'\subset P'$$and hence $$\pr_F(B_A')=\pr_F(\conv(P'))=\conv(\pr_F(P'))\subset \conv(P')=\conv(P')=B_A',$$which means that the projection $\pr_F:A'\to A'$ has norm $\le 1$ and $A'$ is a $1$-based Banach space.

It remains to check that $\|a\|'_A=\|a\|'_\Lambda$ for each $a\in \Lambda$, which is equivalent to the equality $B_A'\cap \Lambda=B_\Lambda'$. The inclusion $B_\Lambda'\subset B_A'\cap \Lambda$ is evident.
To prove the reverse inclusion $B_\Lambda'\supset B_A'\cap \Lambda$ observe that
\begin{align*}
\Lambda\cap B_A'&=\Lambda\cap \operatorname{conv} (P') \subset \Lambda\cap\mathrm{conv}(B_\Lambda'\cup \tfrac1{1+\delta}B_A)=\\
&=\Lambda\cap \{ t\lambda+(1-t)a:t\in[0,1],\; \lambda\in B'_\Lambda,\; a\in \tfrac1{1+\delta}B_A\}=\\&
=\{ t\lambda+(1-t)a: t\in[0,1],\;\lambda\in B_\Lambda',\;a\in \tfrac1{1+\delta}(\Lambda\cap B_A)\}\subset\\& \subset\operatorname{conv} (B_\Lambda'\cup B_\Lambda')=B_\Lambda'.
\end{align*}
The inclusions $\frac{1}{1+\delta' }B_A\subset B_A'\subset (1+\delta )B_A$ imply the strict inequality 
\begin{equation}\label{eq1}
(1+\eps)^{-1}\|a\|_A < \|a\|_A' <(1+\eps)\|a\|_A
\end{equation} holding for all $a\in A\backslash \{0\}$.

Let $\Lambda'$ and $A'$ be the $1$-based Banach spaces $\Lambda$ and $A$ endowed with the new rational norms $\|\cdot\|_\Lambda'$ and $\|\cdot\|_A'$, respectively. It is clear that $\Lambda'\subset A'$. The definition of the norm $\|\cdot\|'_\Lambda$ ensures that $f:\Lambda'\to X$ is a $\BI$-morphism.
Using the $\RI_K$-universality of $X$, extend the isometry morphism $f:\Lambda'\to X$ to an isometry morphism $\bar f:A'\to X$. The inequalities~\eqref{eq1} ensure that $\bar f:A\to X$ is an $\eps$-isometry $\catB$-morphism from $A$, extending the $\eps$-isometry $f$.
This completes the proof of the almost $\FI_1$-universality of $X$.
\end{proof}

\begin{theorem}\label{glowne}
Let $X$ and $Y$ be almost $\FI_K$-universal $K$-based Banach spaces and $ \eps >0$. Each $\eps$-isometry $\catB$-morphism $f:X_0\rightarrow Y$ defined on a finite-dimensional based subspace $X_0$ of the $K$-based Banach space $X$ can be extended to an $\eps$-isometry $\catB$-isomorphism $\bar{f}:X \rightarrow Y$.
\end{theorem}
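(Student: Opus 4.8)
The plan is to run a back-and-forth (zig-zag) argument of Fra\"iss\'e type between $X$ and $Y$: I will build an increasing chain of finite-dimensional based subspaces $A_0\subseteq A_1\subseteq\cdots$ of $X$ and $B_0\subseteq B_1\subseteq\cdots$ of $Y$ together with mutually extending $\catB$-isomorphisms $f_k:A_k\to B_k$, and then let $\bar f$ be their common extension, completed by continuity. First I would fix (bijective) enumerations of the countable bases $\bas_X$ and $\bas_Y$, arranged so that $\bas_{X_0}$ is an initial segment of the enumeration of $\bas_X$, and record that $X=\overline{\operatorname{span}\bas_X}$ and $Y=\overline{\operatorname{span}\bas_Y}$. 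Since $X_0$ is finite-dimensional, compactness of its unit sphere gives some $\e_0\in(0,\e)$ for which the given $f$ is already an $\e_0$-isometry; the whole construction will keep \emph{every} intermediate map an $\e_0$-isometry, and since $\e_0<\e$ this is what forces $\bar f$ to be an $\e$-isometry. The one elementary fact I would isolate at the outset is that an $\e_0$-isometry $T$ between finite-dimensional spaces has, by compactness of the unit sphere, a \emph{uniform} two-sided bound $\alpha\|\cdot\|\le\|T\cdot\|\le\beta\|\cdot\|$ with $(1+\e_0)^{-1}<\alpha\le\beta<1+\e_0$; hence $T^{-1}$ is again an $\e_0$-isometry, so inverting and re-extending never enlarges the constant and no summable error budget is needed.

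Then I would carry out the induction. Put $A_0:=X_0$, $B_0:=f(X_0)$, $f_0:=f$; because a $K$-suppression basis is linearly independent, a $d$-dimensional based subspace contains at most $d$ basis vectors, so $\bas_{B_0}=B_0\cap\bas_Y=f(\bas_{A_0})$ and $f_0$ is an $\e_0$-isometry $\catB$-isomorphism onto the based subspace $B_0$ of $Y$. At an odd step, given an $\e_0$-isometry $\catB$-isomorphism $f_k:A_k\to B_k$, I let $A_k':=\operatorname{span}(\bas_{A_k}\cup\{x_j\})$ for the least $x_j\in\bas_X\setminus A_k$; this is a finite-dimensional $K$-based space (a based subspace of $X$) having $A_k$ as a based subspace, so the almost $\FI_K$-universality of $Y$ extends $f_k:A_k\to Y$ to an $\e_0$-isometry $\catB$-morphism $\tilde f:A_k'\to Y$, which by the linear-independence remark is a $\catB$-isomorphism onto the based subspace $B_k':=\tilde f(A_k')$. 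Now $\tilde f^{-1}:B_k'\to X$ is an $\e_0$-isometry $\catB$-morphism, and the even step is the mirror image: with $y_j$ the least vector of $\bas_Y$ not yet present, the almost $\FI_K$-universality of $X$ extends $\tilde f^{-1}$ to an $\e_0$-isometry $\catB$-isomorphism $h:B_{k+1}\to A_{k+1}$, where $B_{k+1}:=\operatorname{span}(\bas_{B_k'}\cup\{y_j\})$ and $A_{k+1}:=h(B_{k+1})\supseteq A_k'$; I then set $f_{k+1}:=h^{-1}:A_{k+1}\to B_{k+1}$, which extends $f_k$. Interleaving odd and even steps so that each basis vector of $X$ and of $Y$ is eventually absorbed gives $\bigcup_kA_k\supseteq\bas_X$ and $\bigcup_kB_k\supseteq\bas_Y$.

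Finally I would let $\bar f$ be the common extension of the $f_k$ on the dense subspace $\bigcup_kA_k\supseteq\operatorname{span}\bas_X$, extended by uniform continuity to $X$. Each $f_k$ being an $\e_0$-isometry yields $(1+\e_0)^{-1}\|x\|\le\|\bar f(x)\|\le(1+\e_0)\|x\|$ for all $x$, so $\bar f$ is bounded below, hence injective with closed range; the range contains the dense set $\bigcup_kB_k$, so $\bar f$ is onto $Y$. Since $\bas_X=\bigcup_k\bas_{A_k}$, $\bar f(\bas_{A_k})=\bas_{B_k}$ and $\bas_Y=\bigcup_k\bas_{B_k}$, we get $\bar f(\bas_X)=\bas_Y$; with $\bar f$ and $\bar f^{-1}$ both continuous this makes $\bar f$ a $\catB$-isomorphism, and $\e_0<\e$ turns the displayed estimate into the strict inequalities defining an $\e$-isometry. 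Thus $\bar f:X\to Y$ is an $\e$-isometry $\catB$-isomorphism extending $f$. The only genuinely delicate point — and the one I would stress — is the bookkeeping of the distortion: the back-and-forth must be run so that the constant stays pinned at $\e_0$ rather than creeping toward $\e$ across infinitely many steps, which is exactly what the uniform-bound/inversion observation of the first paragraph secures; everything else (linear independence of the basis, surjectivity from density plus boundedness below, continuity of the inverse) is routine.
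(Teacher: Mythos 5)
Your proposal is correct and follows essentially the same back-and-forth scheme as the paper's proof: fix $\delta<\e$ by compactness of the unit sphere of $X_0$ so that $f$ is a $\delta$-isometry, alternately extend the map and its inverse using the almost $\FI_K$-universality of $Y$ and of $X$ (the constant stays at $\delta$ because inverses of $\delta$-isometries are $\delta$-isometries and each extension preserves $\delta$), and pass to the completion by uniform continuity, using $\delta<\e$ to recover the strict inequalities. The only difference is bookkeeping: the paper runs the zig-zag along pre-chosen chains $X_n$, $Y_n$ with two interleaved sequences $f_k$, $g_k$ satisfying $f_{k+1}\circ g_k=\operatorname{id}$ and $g_{k+1}\circ f_{k+1}=\operatorname{id}$, while you absorb one basis vector at a time and invert isomorphisms onto their images, which is the same construction in different notation.
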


\begin{proof}
Fix a positive real number $\eps$. Using the compactness of the unite sphere of the finite dimensional Banach space $X_0$, we can find a positive $\delta <\eps$ such that $f$ is a $\delta $-isometry. Write $X$ and $Y$ as the completions of the unions $\bigcup_{n\in \omega} X_n$ and $\bigcup_{n\in \omega} Y_n$ of chains of finite dimensional $K$-based Banach spaces such that $Y_0=f(X_0)$.
We define inductively sequences of $\catB$-morphisms $\{f_k\}_{k\in \omega}$, $\{g_k\}_{k\in \omega}$ and increasing number sequences $(n_k)$, $(m_k)$ such that $n_0=m_0=0$, $f_0=f$ and the following conditions are satisfied for every $k\in \omega$:
\begin{enumerate}
    \item[(1)] $f_{k}:X_{n_{k-1}}\rightarrow  Y_{m_k}$ and $g_k:Y_{m_k}\rightarrow  X_{n_{k}}$ are $\delta $-isometry $\catB$-morphisms;
    \item[(2)] $f_{k+1}\circ g_{k}=\operatorname{id}\restriction Y_{m_k}$ and $g_{k+1}\circ f_{k+1}=\operatorname{id}\restriction X_{n_k}$.
   % \item[(3)] $X_{n_k}\subseteq X_{n_{k+1}}$, $Y_{m_k}\subseteq Y_{m_{k+1}}$;
    %\item[(4)] $\bigcup \bas_{X_{n_k}}=\bas_X$, $\bigcup \bas_{Y_{n_k}}=\bas_Y$.
\end{enumerate}
To make the inductive step assume that for some $k \in \omega$, the numbers $n_k$, $m_k$ and  $\delta $-isometries $f_k:X_{n_{k-1}}\rightarrow  Y_{m_k}$, $g_k:Y_{m_k}\rightarrow  X_{n_{k}}$ have been constructed. Definition \ref{def2} of almost $\FI_K$-universality of the based Banach space $Y$ yields a 
 $\delta $-isometry $\catB$-morphism $f_{k+1}:X_{n_k}\to Y$ such that $f_{k+1}|g_k(Y_{m_k})=g_k^{-1}|g_k(Y_{m_k})$ and hence $f_{k+1}\circ g_{k}=\operatorname{id}\restriction Y_{m_k}$. Since $f_{k+1}(\bas_{X_{n_k}})$ is a finite subset of the basis $\bas_Y=\bigcup_{i\in\w}\bas_{Y_i}$ of $Y$, there exists a number $m_{k+1}> m_k$ such that $f_{k+1}(\bas_{X_{n_k}})\subset \bas_{Y_{m_{k+1}}}$ and hence $f_{k+1}(X_{n_k})\subset Y_{m_{k+1}}$.
 
By analogy, we can use the almost $\FI_K$-universality of the based Banach space $X$ and find a number $n_{k+1}> n_k$ and a $\delta $-isometry $\catB$-morphism $g_{k+1}:Y_{m_{k+1}}\to X_{n_{k+1}}$ such that $g_{k+1}\circ f_{k+1}=\operatorname{id}\restriction X_{n_k}$.
This complete the inductive step.

After completing the inductive construction consider the  $\delta $-isometries $\tilde{f}:\bigcup_{n\in \omega} X_n \to \bigcup_{m\in \omega} Y_m$ and $\tilde{g}: \bigcup_{m\in \omega} Y_m \to \bigcup_{n\in \omega} X_n$ such that for every $k\in \omega$ $\tilde{f}\restriction X_{n_k}=f_{k+1}$ and $\tilde{g}\restriction Y_{m_k}=g_{k}$. The condition (2) of the inductive construction implies that $\tilde{f}\circ \tilde{g}$ and $\tilde{g}\circ \tilde{f}$ are the identity maps of $\bigcup_{n\in \omega} X_n$ and $\bigcup_{m\in \omega} Y_m$, respectively.

By the uniform continuity, the  $\delta $-isometries $\tilde{f}$, $\tilde{g}$ extend to $\eps$-isometries $\bar{f}:X \to Y$ and $\bar{g}:Y \to X$ such that $\bar{f}\circ \bar{g}=\operatorname{id}_Y $ and $\bar{g}\circ \bar{f}=\operatorname{id}_X $. Taking into account that $f_n$ and $g_n$ are $\catB$-morphisms, we can show (repeating the argument from the proof of Theorem~\ref{t:BI}) that  the operators $\tilde f$ and $\tilde g$ preserve the bases of the $K$-based Banach  spaces $X$ and $Y$ and hence are $\catB$-isomorphisms. 
 \end{proof}

\begin{corollary} For any almost $\FI_K$-universal $K$-based Banach spaces
 $X$ and $Y$ and any $ \eps >0$ there exists an $\eps$-isometry $\catB$-isomorphism $f:X \rightarrow Y$.
\end{corollary}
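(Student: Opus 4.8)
The plan is to derive the corollary as an immediate special case of Theorem~\ref{glowne} by taking the trivial based subspace as the domain of the initial $\eps$-isometry. Concretely, let $X_0=\{0\}$ be the trivial $K$-based Banach space, regarded as a finite-dimensional based subspace of $X$ with empty basis $\bas_{X_0}=\emptyset$. (This is legitimate: the same convention is used, e.g., in the proof of Theorem~\ref{t:BI}, where the chains start with $X_0=\{0\}$ and $Y_0=\{0\}$.)

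First I would check that the unique linear map $f\colon X_0\to Y$, namely the zero map, is an $\eps$-isometry $\catB$-morphism. The $\eps$-isometry inequalities $(1+\eps)^{-1}\|x\|_{X_0}<\|f(x)\|_Y<(1+\eps)\|x\|_{X_0}$ are required only for $x\in X_0\setminus\{0\}$, and this set is empty, so they hold vacuously. Likewise $f(\bas_{X_0})=f(\emptyset)=\emptyset\subseteq\bas_Y$, so $f$ preserves bases and is indeed a $\catB$-morphism.

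Then I would simply apply Theorem~\ref{glowne} to $X$, $Y$, $\eps$, the finite-dimensional based subspace $X_0$ of $X$, and the $\eps$-isometry $\catB$-morphism $f\colon X_0\to Y$. The theorem produces an $\eps$-isometry $\catB$-isomorphism $\bar f\colon X\to Y$ extending $f$, which is exactly the map required by the corollary.

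I do not expect any genuine obstacle here: the only point worth a moment's care is confirming that the hypotheses of Theorem~\ref{glowne} are met by the trivial subspace $X_0=\{0\}$ and the zero map, and that the almost $\FI_K$-universality hypothesis on both $X$ and $Y$ in the corollary matches the hypothesis of Theorem~\ref{glowne} verbatim. Once these are noted, the proof is a one-line invocation of Theorem~\ref{glowne}.
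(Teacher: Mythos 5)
Your proposal is correct and matches the paper's (implicit) derivation: the corollary is stated as an immediate consequence of Theorem~\ref{glowne}, obtained exactly by taking the trivial based subspace $X_0=\{0\}$ with the zero map, which is vacuously an $\eps$-isometry $\catB$-morphism (a convention the paper itself uses, e.g.\ in the proofs of Theorems~\ref{t:BI} and~\ref{wazne}). No gaps here.
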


\begin{theorem}\label{wazne} Let $U$ be an almost $\FI_K$-universal $K$-based Banach space. For any $\eps>0$ and any $K$-based Banach space $X$ there exists  an $\eps$-isometry $\catB$-morphism $f:X\to U$. \end{theorem}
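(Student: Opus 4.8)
The plan is to write $X$ as the completion of an increasing chain $\bigcup_{n\in\w}X_n$ of finite-dimensional $K$-based subspaces and then to build, by a purely one-sided inductive construction (no back-and-forth is needed here, since we only want a morphism into $U$, not an isomorphism as in Theorem~\ref{glowne}), a coherent sequence of $\delta$-isometry $\catB$-morphisms $f_n:X_n\to U$ for a single fixed $\delta<\eps$; their common extension, carried over to the completion, will be the required $\eps$-isometry.

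First I would fix the (countable) normalized $K$-suppression basis $\bas_X=\{b_1,b_2,\dots\}$ of $X$ and set $X_0:=\{0\}$ and $X_n:=\mathrm{span}\{b_1,\dots,b_n\}$ with $\bas_{X_n}:=\{b_1,\dots,b_n\}$ for $n\ge 1$. Using the unconditionality of $\bas_X$ (more precisely, the uniqueness of the coordinate expansion) one checks that $X_n\cap\bas_X=\bas_{X_n}$, so each $X_n$ is a based subspace of $X$; since the coordinate functionals of $\bas_{X_n}$ are the restrictions of those of $\bas_X$, the projections $\pr_F\restriction X_n$ for $F\subset\bas_{X_n}$ have norm $\le K$, so every $X_n$ is a finite-dimensional $K$-based Banach space, $(X_n)_{n\in\w}$ is a chain, $X$ is the completion of $\bigcup_{n\in\w}X_n$, and $\bas_X=\bigcup_{n\in\w}\bas_{X_n}$.

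Next, fix $\delta$ with $0<\delta<\eps$ and construct the morphisms $f_n$ inductively. Let $f_0:X_0\to U$ be the trivial operator, which is (vacuously) a $\delta$-isometry and, since $\bas_{X_0}=\emptyset$, a $\catB$-morphism. Given a $\delta$-isometry $\catB$-morphism $f_n:X_n\to U$, I would apply Definition~\ref{def2} (the almost $\FI_K$-universality of $U$) to the finite-dimensional $K$-based Banach space $X_{n+1}$, its based subspace $X_n$, and the $\delta$-isometry $f_n$: this produces a $\delta$-isometry $\catB$-morphism $f_{n+1}:X_{n+1}\to U$ with $f_{n+1}\restriction X_n=f_n$. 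These coherence relations let me define a linear operator $\tilde f:\bigcup_{n\in\w}X_n\to U$ by $\tilde f\restriction X_n=f_n$; it is a $\delta$-isometry on the dense subspace $\bigcup_{n\in\w}X_n$ of $X$ and satisfies $\tilde f(\bas_X)=\bigcup_{n\in\w}f_n(\bas_{X_n})\subseteq\bas_U$.

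Finally, since $\tilde f$ has norm at most $1+\delta$ and $U$ is complete, $\tilde f$ extends to a bounded linear operator $\bar f:X\to U$; passing to the closure yields $(1+\delta)^{-1}\|x\|_X\le\|\bar f(x)\|_U\le(1+\delta)\|x\|_X$ for all $x\in X$, and, because $\delta<\eps$, these turn into the strict inequalities $(1+\eps)^{-1}\|x\|_X<\|\bar f(x)\|_U<(1+\eps)\|x\|_X$ for $x\ne 0$, so $\bar f$ is an $\eps$-isometry. As $\bar f(\bas_X)=\tilde f(\bas_X)\subseteq\bas_U$, it is a $\catB$-morphism, which finishes the proof. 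The argument is essentially routine; the only points needing a little care are the verification that each $X_n$ is a based subspace of $X$ that is itself $K$-based, and the (standard) passage from the non-strict $\delta$-isometry inequalities on the completion back to strict $\eps$-isometry inequalities — which is exactly why $\delta$ is chosen strictly below $\eps$.
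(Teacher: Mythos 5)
Your proposal is correct and follows essentially the same route as the paper: write $X$ as the completion of a chain of finite-dimensional $K$-based subspaces, fix $\delta<\eps$, inductively extend $\delta$-isometry $\catB$-morphisms $f_n:X_n\to U$ using the almost $\FI_K$-universality of $U$, and pass to the completion to obtain an $\eps$-isometry preserving the basis. The extra details you supply (that each $X_n=\mathrm{span}\{b_1,\dots,b_n\}$ is indeed a $K$-based based subspace, and the strict-versus-non-strict inequality bookkeeping) are exactly the routine points the paper leaves implicit.
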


\begin{proof}
Write $X$ as the completion of the union $\bigcup_{n\in \omega} X_n$ of a chain of finite dimensional $K$-based Banach subspaces $X_n$ of $X$ such that $X_0=\{0\}$. Fix a positive real number $\eps$ and choose any $\delta <\eps$.
We shall define inductively a sequence of $\delta$-isometry $\catB$-morphisms $(f_{k}:X_{k}\rightarrow  U)_{k=0}^\infty$ such that  $f_k\restriction X_{k-1}=f_{k-1}$ for every $k>0$.

We set $f_0 = 0$. Suppose that for some $k\in \omega$ a $\delta $-isometry $\catB$-morphism $f_k:X_{k}\to U$ has already been constructed.
Using the definition of the almost $\FI_K$-universality of the space $U$, we can find a $\delta $-isometry $\catB$-morphism $f_{k+1}:X_{k+1}\to U$ such that  $f_{k+1}\restriction X_{k}=f_{k}$. This completes the inductive step.

After completing the inductive construction consider the $\delta $-isometry $f:\bigcup_{k\in\w} X_k \to U$ such that  $f\restriction X_{k}=f_k$ for every $k\in \omega$.

By the uniform continuity, the $\delta $-isometry $f$ extends to an $\eps$-isometry $\bar{f}:X\to U$ such that $$\textstyle{f(\bas_X)=f\big(\bigcup_{k\in\w}B_{X_k}\big)=\bigcup_{k\in\w}f(B_{X_k})=\bigcup_{k\in\w}f_k(B_{X_k})\subset\bas_{\U},}$$ which means that $f$ is a $\catB_K$-morphism.
\end{proof}

\begin{corollary}\label{c:p} Each almost $\FI_K$-universal $K$-based Banach space $U$ is $\catB$-universal.
\end{corollary}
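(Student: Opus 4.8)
The plan is to upgrade the $\eps$-isometry $\catB$-morphisms supplied by Theorem~\ref{wazne} into genuine $\catB$-isomorphisms onto based subspaces of $U$. Since Theorem~\ref{wazne} applies to $K$-based spaces, I first reduce the general case to that one. Let $X$ be an arbitrary based Banach space, with normalized $L$-suppression basis $\bas_X$ and coordinate functionals $(\mathsf e^*_b)_{b\in\bas_X}$. As $\bas_X$ is unconditional, the formula $|||x|||:=\sup\{\,\|\sum_{b\in\bas_X}\theta_b\,\mathsf e^*_b(x)\,b\|_X:\ \theta_b\in[-1,1]\,\}$ defines a norm on $X$ equivalent to $\|\cdot\|_X$ for which every finite coordinate projection has norm $\le 1$ and every basis vector has norm $1$; hence $\tilde X:=(X,|||\cdot|||)$ is a $1$-based (so $K$-based) Banach space, and the identity operator is a $\catB$-isomorphism $X\to\tilde X$. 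Thus it suffices to realize every $K$-based Banach space $\tilde X$ as a based subspace of $U$.

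Applying Theorem~\ref{wazne} with $\eps=1$, I obtain a $1$-isometry $\catB$-morphism $f:\tilde X\to U$. Being an $\eps$-isometry, $f$ is injective with bounded inverse on its range, so $f$ is a linear topological isomorphism onto the closed subspace $f(\tilde X)\subseteq U$; moreover $f(\bas_{\tilde X})\subseteq\bas_U$ since $f$ is a $\catB$-morphism. The crucial step — and, I expect, the most delicate one — is to verify that $f(\tilde X)\cap\bas_U=f(\bas_{\tilde X})$: a priori the image of an $\eps$-isometry could contain basis vectors of $U$ not lying in $f(\bas_{\tilde X})$, and ruling this out uses the rigidity of unconditional expansions. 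Concretely, given $b\in\bas_U\cap f(\tilde X)$, write $b=f(x)$ and expand $x=\sum_n a_n e_n$ along the basis $\bas_{\tilde X}=\{e_n\}$; by continuity of $f$ we get $b=\sum_n a_n f(e_n)$ in $U$, a series in the pairwise distinct vectors $f(e_n)\in\bas_U$. Evaluating the coordinate functional $\mathsf e^*_b$ of $U$ and using that it kills every element of $\bas_U\setminus\{b\}$ forces some $f(e_{n_0})=b$ (otherwise the series would be annihilated, contradicting $\mathsf e^*_b(b)=1$) with $a_{n_0}=1$; evaluating each $\mathsf e^*_{f(e_n)}$ for $n\ne n_0$ then forces $a_n=0$, so $x=e_{n_0}\in\bas_{\tilde X}$ and $b\in f(\bas_{\tilde X})$.

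Granted this equality, $f(\bas_{\tilde X})=f(\tilde X)\cap\bas_U$ is a Schauder basis of $f(\tilde X)$ consisting of unit vectors whose finite coordinate projections, being conjugated by $f$ to those of $\tilde X$, have norm at most $\|f\|\,\|f^{-1}\|\le(1+\eps)^2$; so $(f(\tilde X),f(\bas_{\tilde X}))$ is a based Banach space and, by the displayed equality, a based subspace of $U$. Consequently $f:\tilde X\to(f(\tilde X),f(\bas_{\tilde X}))$ is a $\catB$-isomorphism — its inverse is continuous and carries the basis back to $\bas_{\tilde X}$ — and composing with the $\catB$-isomorphism $X\to\tilde X$ from the first step exhibits $X$ as $\catB$-isomorphic to a based subspace of $U$. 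Since $X$ was arbitrary, $U$ is $\catB$-universal. (For $K>1$ the statement is vacuously true by Proposition~\ref{p:non}; the renorming step is what makes the argument uniform over all $K\ge1$.)
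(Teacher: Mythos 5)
Your proof is correct and follows essentially the same route as the paper: renorm $X$ so that its basis becomes $1$-suppression (the paper uses $\|x\|_1=\sup_{F\subset\bas_X}\|\pr_F(x)\|$, you use the unconditional-multiplier norm; both work), apply Theorem~\ref{wazne} to get an $\e$-isometry $\catB$-morphism into $U$, and conclude that its image is a based subspace. The only difference is that you explicitly verify the equality $f(\bas_{\tilde X})=f(\tilde X)\cap\bas_U$ required by the definition of a based subspace --- a point the paper simply asserts --- and your coordinate-functional argument for it is sound.
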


\begin{proof} Given a based Banach space $X$, we need to prove that $X$ is $\catB$-isomorphic to a based subspace of $U$. Denote by $X_1$ the based Banach space $X$ endowed with the equivalent norm
$$\|x\|_1=\sup_{F\subset\bas_X}\|\pr_F(x)\|.$$
It is easy to check that $X_1$ is a 1-based Banach space. By Theorem~\ref{wazne}, for $\e=\frac12$ there exists an $\e$-isometry $\catB$-morphism $f:X_1\to U$. Then $f$ is a $\catB$-isomorphism between  $X$ and the based subspace $f(X)=f(X_1)$ of the based Banach space $U$.
\end{proof}

Corollary~\ref{c:p} combined with the Uniqueness Theorem \ref{t:up} of Pe\l czy\'nski implies

\begin{corollary}\label{c:au} Each almost $\FI_K$-universal $K$-based Banach space $U_K$ is $\catB$-isomorphic to the $\catB$-universal space $\U$ of Pe\l czy\'nski.
\end{corollary}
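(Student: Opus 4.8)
The plan is to deduce the statement directly from the two results immediately preceding it, with no new construction required. First I would recall that, by Corollary~\ref{c:p}, every almost $\FI_K$-universal $K$-based Banach space $U_K$ is $\catB$-universal in the sense of Definition~\ref{d:uB}; this is where the real content sits, since Corollary~\ref{c:p} itself rests on Theorem~\ref{wazne} (an almost $\FI_K$-universal space admits an $\e$-isometry $\catB$-morphism from any $K$-based Banach space) together with the renorming trick $\|x\|_1=\sup_{F\subset\bas_X}\|\pr_F(x)\|$ that turns an arbitrary based Banach space into a $1$-based one.

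Next I would note that Pe\l czy\'nski's space $\U$ from \cite{pelbases} is, by the result quoted just after Theorem~\ref{t:up}, a $\catB$-universal based Banach space. Thus both $U_K$ and $\U$ are $\catB$-universal. Applying Pe\l czy\'nski's Uniqueness Theorem~\ref{t:up}, which asserts that any two $\catB$-universal based Banach spaces are $\catB$-isomorphic, we conclude that $U_K$ is $\catB$-isomorphic to $\U$, as claimed.

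The only thing to verify is a bookkeeping point: the meaning of ``$\catB$-universal'' must be the same in Corollary~\ref{c:p}, in the description of $\U$, and in Theorem~\ref{t:up} — all three refer to Definition~\ref{d:uB}, so there is nothing to reconcile. Consequently there is no genuine obstacle at this stage; the argument is a two-line composition of Corollary~\ref{c:p} with Theorem~\ref{t:up}, all the difficulty having been absorbed into those earlier statements (and, ultimately, into the Amalgamation Lemma~\ref{lemat} and the Fra\"iss\'e-sequence machinery of Section~\ref{s:RI}).
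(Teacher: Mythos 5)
Your argument matches the paper's own deduction exactly: Corollary~\ref{c:p} gives that $U_K$ is $\catB$-universal, Pe\l czy\'nski's space $\U$ is $\catB$-universal by construction, and Theorem~\ref{t:up} yields the $\catB$-isomorphism. This is precisely how the paper obtains Corollary~\ref{c:au}, so the proposal is correct and takes the same route.
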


Combining Corollary~\ref{c:au} with Theorem~\ref{t:ru=>au}, we get another model of the $\catB$-universal Pe\l czy\'nski's space $\U$.

\begin{corollary}\label{c:ru} Each $\RI_K$-universal rational $K$-based Banach space $\IU_K$ is $\catB$-isomorphic to the $\catB$-universal Pe\l czy\'nski's space $\U$.
\end{corollary}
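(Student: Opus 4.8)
The plan is simply to feed Theorem~\ref{t:ru=>au} into the chain of implications already established for almost $\FI_K$-universal based Banach spaces. By Theorem~\ref{t:ru=>au}, every $\RI_K$-universal rational $K$-based Banach space $\IU_K$ is almost $\FI_1$-universal; this is the one non-formal ingredient, and everything that follows is a matter of citing statements proved above.

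Next I would run the argument of Corollary~\ref{c:p} with the constant $1$ in place of $K$. Given an arbitrary based Banach space $X$, replace its norm by the equivalent norm $\|x\|_1=\sup_{F\subset\bas_X}\|\pr_F(x)\|$, which turns $X$ into a $1$-based Banach space $X_1$; then Theorem~\ref{wazne}, applied with $K=1$ to the almost $\FI_1$-universal space $\IU_K$, produces a $\tfrac12$-isometry $\catB$-morphism $f\colon X_1\to\IU_K$, and $f$ is a $\catB$-isomorphism of $X$ onto the based subspace $f(X_1)$ of $\IU_K$. Here one uses that the proofs of Theorem~\ref{wazne} and Corollary~\ref{c:p} invoke only the almost $\FI_1$-universality of the target space, and never the property that the target space itself is $1$-based; equivalently, one simply appeals to Corollary~\ref{c:au} taken with $K=1$. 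Thus $\IU_K$ is $\catB$-universal, and Pe\l czy\'nski's Uniqueness Theorem~\ref{t:up} then gives a $\catB$-isomorphism between $\IU_K$ and Pe\l czy\'nski's space $\U$.

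The only point that needs attention --- and it is a matter of book-keeping of constants rather than a genuine obstacle --- is that, by Proposition~\ref{p:non}, $\IU_K$ is \emph{not} almost $\FI_K$-universal once $K>1$, so one must route the argument through the almost $\FI_1$-universality supplied by Theorem~\ref{t:ru=>au}, and note that the $\catB$-universality argument of Corollary~\ref{c:p} tolerates a target space whose suppression constant exceeds $1$. Granting this observation, the corollary is immediate.
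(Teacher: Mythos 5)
Your proof is correct and follows essentially the same route as the paper, which likewise deduces the corollary by combining Theorem~\ref{t:ru=>au} with Corollary~\ref{c:au} (i.e.\ Corollary~\ref{c:p} plus the Uniqueness Theorem~\ref{t:up}). Your explicit remark that the universality argument must be run with constant $1$ even though the target $\IU_K$ is only $K$-based---and that this is harmless because the proofs of Theorem~\ref{wazne} and Corollary~\ref{c:p} use only the almost $\FI_1$-universality of the target, never its own basedness constant---is correct bookkeeping that the paper's one-line proof leaves implicit.
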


\section{Acknowledgements} The authors express their sincere thanks to the anonumous referee for careful reading the manuscript and many valuable remarks resulting in an essential improvement of the results and their presentation.
ALso we thank the referee of the paper \cite{ataras} who noticed a gap in the proof of the initial version of Theorem~\ref{t:ru=>au}, which is now written in the correct form.
%\newpage

\end{document}